\title{A note on forcing triples with no forcing pairs}
\author{Nikola Spasić}
\date{}
\newtheorem{theorem}{Theorem}[section]
\newtheorem{definition}{Definition}[section]
\newtheorem{proposition}{Proposition}[section]
\newtheorem{lemma}[theorem]{Lemma}
\xdef\csname restatethis@#1\endcsname{%
    \unexpanded\expandafter{\BODY}%
  }%
\BODY\end{theorem}%
\newcommand{\restate}[1]{%
  \begin{theorem}\label{#1}\csname restatethis@#1\endcsname\end{theorem}%
}
\pgfplotsset{compat=newest}
\begin{document}

\maketitle

\begin{abstract}
\noindent
Chung, Graham and Wilson defined a set of graphs $\mathcal{H}$ to be \emph{forcing}, if any sequence of graphs $\{G_n\}_{n \geq 0}$ with $|G_n| = n$ must be quasirandom, whenever $hom(H, G_n)= (p^{|E(H)|}+o(1))n^{|V(H)|}$ for every $H \in \mathcal{H}$ and some constant $p \in (0, 1)$.
Answering a question of Horn, attributed to Graham, a forcing set of three graphs is constructed such that no two of the three graphs are forcing as a pair.

\end{abstract}

\section{Introduction}

Since the introduction of the notion of quasirandomness by Chung, Graham and Wilson \cite{CGW}, building upon the work of Thomason \cite{T-87}, a great deal of work was done to find properties equivalent to quasirandomness \cite{CFS-10}\cite{CHPS-11}\cite{HPS-11}\cite{SS-97}\cite{SS-2007}\cite{ST-04}. 
To be more precise, a part of what Chung, Graham and Wilson showed is contained in the following theorem.

\begin{theorem}
    For a sequence of graphs $\{G_n\}_{n \geq 0}$ such that $|G_n| = n$ and a fixed constant $p \in (0, 1)$, the following are equivalent:
    \begin{itemize}[leftmargin=0.5in]
        \item [(P0) -] $\mathcal{G}$ is quasirandom.
        \item [(P1) -] $\forall U \subset V(G_n), |E(G_n[U])| = p\binom{|U|}{2} + o(n^2)$.
        \item [(P2) -] $|E(G_n)| = (p+o(1))n^2$, $\lambda_1 = (p+o(1))n$ and $\lambda_i = o(n)$ for $i \neq 1$, where $\lambda_1 \geq \lambda_2 \geq \dots \geq \lambda_n$ is the spectrum of $G_n$.
        \item [(P3) -] For every graph $H,$ $ hom(H, G_n) = (p^{|E(H)|}+o(1))n^{|V(H)|}.$
        \item [(P4) -] $hom(e, G_n) = (p+o(1))n^2$ and $hom(C_4, G_n) = (p^4+o(1))n^4$.
    \end{itemize}
\end{theorem}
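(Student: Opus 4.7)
The plan is to prove the cycle $\text{P3}\Rightarrow \text{P4}\Rightarrow \text{P2}\Rightarrow \text{P1}\Rightarrow \text{P0}\Rightarrow \text{P3}$, since P0 and P1 are close to definitional while the real content lies in the spectral and homomorphism translations. The step P3 $\Rightarrow$ P4 is immediate by specialising $H$ to an edge and to $C_4$. For P4 $\Rightarrow$ P2, the relevant identities are $hom(C_4, G_n) = \mathrm{tr}(A^4) = \sum_i \lambda_i^4$ and $2|E(G_n)| = hom(e, G_n) = \sum_i \lambda_i^2$; the Rayleigh-quotient bound on the all-ones vector gives $\lambda_1 \geq 2|E(G_n)|/n = (p+o(1))n$, while $\lambda_1^4 \le \sum_i \lambda_i^4 = (p^4+o(1))n^4$ forces $\lambda_1 = (p+o(1))n$. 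Combining these yields $\sum_{i\neq 1}\lambda_i^4 = o(n^4)$ and hence $\max_{i\neq 1}|\lambda_i|=o(n)$.

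For P2 $\Rightarrow$ P1 I would invoke the expander mixing lemma: expanding $\mathbf{1}_U = \sum_i c_i v_i$ in an orthonormal eigenbasis with $v_1 \approx n^{-1/2}\mathbf{1}$, one has $2|E(G_n[U])| = \mathbf{1}_U^T A \mathbf{1}_U = \sum_i \lambda_i c_i^2$; isolating the $\lambda_1$-term and bounding the remainder by $\max_{i\neq 1}|\lambda_i|\cdot\|\mathbf{1}_U\|^2 = o(n)\cdot|U|$ delivers $|E(G_n[U])| = p\binom{|U|}{2}+o(n^2)$. The step P1 $\Rightarrow$ P0 is essentially the definition of quasirandomness (a DISC-type condition), so contributes no substantial content.

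The main obstacle is P0 $\Rightarrow$ P3, which I would handle via a counting lemma. After rewriting P0 in the cut-norm form $\|A_{G_n} - pJ\|_{\square} = o(n^2)$ (which typically follows from P1 by a short averaging over rectangles $S \times T$), one proves by a telescoping argument, replacing the indicator of one edge of $H$ at a time with the constant $p$, that
\[
\bigl|hom(H, G_n) - p^{|E(H)|}n^{|V(H)|}\bigr| \le |E(H)|\,n^{|V(H)|-2}\,\|A_{G_n} - pJ\|_{\square},
\]
which is $o(n^{|V(H)|})$ as required. Making this telescoping estimate rigorous for arbitrary fixed $H$ and verifying the passage from the author's chosen formulation of P0 to the cut-norm bound are the only genuinely technical steps; everything else is routine bookkeeping.
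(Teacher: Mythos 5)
The paper does not prove this theorem; it is the Chung--Graham--Wilson equivalence theorem, quoted from \cite{CGW} as background. There is therefore no ``paper's own proof'' to compare against, and your sketch should be judged on its own as an outline of the classical argument.

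As such, your cycle P3 $\Rightarrow$ P4 $\Rightarrow$ P2 $\Rightarrow$ P1 $\Rightarrow$ P0 $\Rightarrow$ P3 is the standard route and the key identities ($\mathrm{tr}(A^2) = hom(e,G_n)$, $\mathrm{tr}(A^4) = hom(C_4,G_n)$, the Rayleigh quotient lower bound, the expander mixing lemma, and the cut-norm counting lemma) are the right ingredients. Two points deserve more care if this were to be made rigorous. First, in P2 $\Rightarrow$ P1 you write $v_1 \approx n^{-1/2}\mathbf{1}$ without justification: this does not follow from P2 alone as a statement about $\lambda_1$, but has to be derived by observing that $\mathbf{1}/\sqrt{n}$ nearly maximizes the Rayleigh quotient (since $\mathbf{1}^T A \mathbf{1}/n = (p+o(1))n = \lambda_1(1+o(1))$) together with the spectral gap $\max_{i\neq 1}|\lambda_i|=o(n)$; only then can one bound the component of $\mathbf{1}_U$ orthogonal to $v_1$ separately from the component along $v_1$. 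Second, P1 $\Rightarrow$ P0 is only ``definitional'' once one fixes a definition of quasirandom, which the statement as reproduced in the paper never actually does; in the original CGW framework P0 is not a separate primitive notion but is itself one of the equivalent properties, so strictly speaking your chain needs one of P1--P4 promoted to a definition to avoid circularity. There is also a normalization mismatch in the statement as reproduced in the paper (P2 asserts $|E(G_n)| = (p+o(1))n^2$, which is off by a factor of $2$ from what P1 or P4 would give), but that is an issue with the source statement rather than with your argument, which is internally consistent with the homomorphism-count normalization throughout.
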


\noindent
Let us focus on the last two conditions from the theorem above. Informally, $\mathit{P3}$ requires correct homomorphism counts for all graphs, while  $\mathit{P4}$, seemingly, asks for much less. 
Indeed, it asks for homomorphism counts to be correct just for graphs from the set $\mathcal{H} := \{e, C_4\}$,  but it still ensures quasirandomness of the sequence. 
In particular, this implies that if the homomorphism counts are correct for graphs in $\mathcal{H}$, then all graphs are forced to have correct homomorphism counts. 
This motivates the following definition of Chung, Graham and Wilson.

\begin{definition}
    A set of graphs $\mathcal{H}$ is forcing if for any sequence of graphs $\{G_n\}_{n \geq 0}$, such that $|G_n| = n$, and any fixed constant $p \in (0, 1)$, the following are equivalent:
    \begin{itemize}
    [leftmargin=0.5in]
        \item[-] $\forall H \in \mathcal{H}, hom(H, G_n) = (p^{|E(H)|}+o(1))n^{|V(H)|}$.
        \item [(P0) -] $\mathcal{G}$ is quasirandom.
    \end{itemize}
\end{definition}

\noindent
By showing that $\mathit{P4}$ implies $\mathit{P0}$, Chung, Graham and Wilson showed that the set $\{e, C_4\}$ is forcing. 
Furthermore, they also showed that the sets $\{e, C_{2a}\}$ and $\{P_2, K_{2, a}\}$ are forcing for any $a \geq 2$, already showing many forcing pairs of graphs.\footnote{Slightly abusing the notation, we shall use the statements ``the set $\mathcal{H}=\{H_i : i \in I\}$ is forcing" and ``the graphs $H_i$ for $i \in I$ are forcing" interchangeably.} 
Skokan and Thoma \cite{ST-04} showed that $\{e, K_{a, b}\}$ is forcing for any $a, b \geq 2$, and asked whether $\{e, B\}$ is forcing for any bipartite graph $B$ that contains a cycle. 
Conlon, Fox and Sudakov \cite{CFS-10} conjectured that this is true, naming it the \textit{forcing conjecture}. 
Moreover, they showed that $\{e, H\}$ is forcing for any bipartite graph $H$ such that each vertex set has at least two vertices, and one vertex set has two vertices that are completely connected to the other part. 
As for forcing pairs of non-bipartite graphs, Conlon, H\`an, Person and Schacht \cite{CHPS-11} showed that for every $a$, there is a graph $H$ such that $\{K_a, H\}$ is forcing. 
Continuing in this direction, H\`an, Person and Schacht \cite{HPS-11} showed that for every graph $F$, there is a graph $H$ such that $\{F, H\}$ is forcing.\\

The above-mentioned results deal with forcing \emph{pairs} rather than with sets of more than two graphs. An obvious question, if one wishes to consider larger sets of graphs, is whether there is a forcing triple of graphs that contains no forcing pair. This question appears in an online collection of problems collated by Paul Horn \cite{PH-Q}, where it is attributed to Ron Graham.


This question is answered in the affirmative in this paper. The main result is the following theorem.

\begin{theorem}
\label{theorem:main}
    There are forcing triples with no forcing pairs.
\end{theorem}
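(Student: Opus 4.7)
\medskip
\noindent\textbf{Proof plan.} I would work throughout in the language of graph limits, in which a sequence $\{G_n\}$ of growing graphs converges to a graphon $W:[0,1]^2\to[0,1]$ and the normalized homomorphism counts converge to the homomorphism density $t(H,W)$. In this language, a set $\mathcal{H}$ is forcing iff every graphon satisfying $t(H,W)=p^{|E(H)|}$ for all $H\in\mathcal{H}$ equals the constant $p$ almost everywhere. The task then splits into two halves: exhibit an explicit triple $\mathcal{H}=\{H_1,H_2,H_3\}$ that is forcing in this sense, and for each of its three pairs produce a nonconstant graphon witnessing non-forcing of that pair.

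For the negative half, for each pair $\{H_i,H_j\}$ I would search for a step graphon (say with two or three blocks) of edge density $p$ satisfying $t(H_i,W_{ij})=p^{|E(H_i)|}$ and $t(H_j,W_{ij})=p^{|E(H_j)|}$ but not equal to the constant $p$. A two-block graphon of fixed edge density $p$ carries two free parameters (the density on one block and the block proportion), while the pair imposes only two density equations; after linearization one generically expects a discrete but nonconstant family of solutions, and the search reduces to solving a small polynomial system.

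For the positive half, the cleanest target is condition (P4) of the Chung--Graham--Wilson theorem: derive from the three equations that $t(e,W)=p$ and $t(C_4,W)=p^4$, which then forces $W\equiv p$. The strategy is to produce an algebraic relation expressing $t(C_4,W)-p^4$ as a controlled combination of the three deviations $t(H_i,W)-p^{|E(H_i)|}$; equivalently, via Cauchy--Schwarz and Sidorenko-type inequalities, to show that the three constraints together pin down both the degree function and the spectral profile of $W$, with no such pinning available from any two of them alone.

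The main obstacle is the simultaneous calibration: the triple must be rich enough to rule out \emph{every} nontrivial deviation from the constant graphon, while each of its three pairs is deliberately one equation short. I would therefore aim for graphs whose homomorphism densities on step graphons are polynomials in a small number of natural invariants (degree moments, or eigenvalue moments for suitably bipartite and regular graphs), so that the three conditions cut out only the constant graphon while any two leave a positive-dimensional family of solutions to exhibit as explicit counterexamples. Once such a triple is isolated, the remaining work is essentially a concrete polynomial computation to verify forcingness, together with the explicit construction of three block-graphon witnesses for the three pairs.
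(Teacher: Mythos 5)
Your framework — graph limits / step weighted graphs for the negative half, a reduction to the Chung–Graham–Wilson pair $\{e,C_4\}$ for the positive half — matches the paper's in spirit, but the proposal stops short of a proof at precisely the points where the real work lies, and the central idea for the negative half is a heuristic that would not go through as stated. For the non-forcing direction you argue by a parameter count: two free parameters in a two-block graphon of density $p$, two equations from the pair, hence "generically" a discrete nonconstant solution set. That count is not a proof (the solution variety could consist only of the constant graphon, or lie outside $[0,1]$), and nothing in the proposal singles out a triple for which it would actually succeed. The paper replaces this heuristic with a concrete and verifiable criterion: for two specific one-parameter families of two-block weight functions, namely $w=(1,0,1)$ (two disjoint cliques, giving $f_w(F)=(1/2)^{(n(F)-1)/m(F)}$ for connected $F$) and $w=(x,1,x)$ with $x\to 0$ (near-bipartite, giving $f_w(F)=\Theta(x^{1-b(F)/m(F)})$), one gets opposite strict inequalities between $f_w(F_1)$ and $f_w(F_2)$ whenever $\frac{n_1-1}{m_1}>\frac{n_2-1}{m_2}$ and $\frac{b_1}{m_1}>\frac{b_2}{m_2}$; an intermediate-value argument along a path avoiding the constant diagonal then produces a nonconstant $w$ with $f_w(F_1)=f_w(F_2)$, hence a non-quasirandom sequence with the right two densities. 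This reduces non-forcing to a short list of elementary inequalities in $n$, $m$, and max-cut, and is what makes it possible to handle all three pairs uniformly; your plan has no substitute for it.

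Two further gaps are worth naming. First, you never exhibit a candidate triple; the paper's constructions $\mathcal{H}_1(T)$, $\mathcal{H}_2(T)$, $\mathcal{H}_3(T)$, built from doublings and pendant-vertex operations on an arbitrary connected non-bipartite $T$, are engineered precisely so that the ratios $\frac{n-1}{m}$ and $\frac{b}{m}$ move monotonically through the triple (so every pair fails the forcing test) while the triple still determines $t(e)$ and $t(C_4)$. Second, for the positive half the phrase "produce an algebraic relation expressing $t(C_4,W)-p^4$ as a controlled combination of the three deviations" undersells what is needed: the paper's proofs for $\mathcal{H}_2$ and $\mathcal{H}_3$ rest on a quantitative stability lemma for Jensen's inequality (near-equality forces most terms near the mean), which is then used to show that all but $o(n)$ vertices have the correct degree and the correct $T$-count, and only then to bound $\#C_4$. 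A purely algebraic Cauchy–Schwarz chain, as in the proof for $\mathcal{H}_1$, suffices only when one of the three graphs is a long even cycle; in general the stability step is essential and is missing from your sketch.
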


\noindent
In fact, three infinite families of such triples will be constructed, in Theorem \ref{theorem:explicit} below, each parametrised by an arbitrary connected non-bipartite graph $T$.
The first construction, $\mathcal{H}_1(T)$, is the simplest one; the second construction, $\mathcal{H}_2(T)$, will have $e$ as one of the graphs; the third construction, $\mathcal{H}_3(T)$, will have $T$ as one of the graphs.
Actually, the graphs in $\mathcal{H}_3(T)$ have two additional properties worth mentioning. 
Firstly, they  all have the same chromatic number as $T$. Secondly, their number of vertices is linear in $|T|$, which contrasts with the exponential dependency in known constructions for a graph $H$ such that the pair $\{T, H\}$ is forcing \cite{CHPS-11}\cite{HPS-11}. \\

The question above is understood to ask for connected graphs, and all graphs used in forming the triples will be connected. If disconnected graphs are allowed, then the question becomes much easier. For example, from what will be shown, it will be clear that if $\{B_1, B_2\}$ is a forcing set consisting of two bipartite graphs, and if $a$ is a large enough integer, then $\{B_1, B_2K_a, K_a\}$ is a forcing triple with no forcing pairs. Here $F_1F_2$ means a disjoint union of graphs $F_1$ and $F_2$. Indeed, since $hom(F_1F_2, G) = hom(F_1, G)hom(F_2,G)$, the homomorphism counts of $K_a$ and $B_2K_a$ determine the homomorphism count of $B_2$. This shows that the triple mentioned is forcing, while the fact that pairs used are not forcing follows from Proposition \ref{prop:condition}.\\

The structure of the paper is as follows. Section \ref{section:notation} introduces the necessary notation, while Section \ref{section:construction} presents the construction of all the triples. In Section \ref{section:forcing} we show that these triples are forcing, and in Section \ref{section:non-forcing} we show that none of them contains a forcing pair. 
Finally, in two appendices, we present the version we shall need of a well-known result about near equality in the Cauchy-Schwarz inequality, and verify some simple inequalities that are needed in Section \ref{section:non-forcing}. 
The main result, Theorem \ref{theorem:main}, is a direct consequence of Theorem \ref{theorem:explicit}.

\section{Notation}
\label{section:notation}
For any graph $F$, let $n(F)$, $m(F)$ and $b(F)$ be the number of vertices, the number of edges,
and the number of edges in the largest bipartite subgraph of $F$, i.e. max-cut$(F)$, respectively. 
Fix $T$ to be an arbitrary  connected non-bipartite graph, and define $N := n(T)$, $M := m(T)$ and $B := b(T)$. \\

Let $F$ and $G$ be arbitrary graphs, and denote by $t_G(F) = t(F, G)$ the probability that a random map ${\phi : V(F) \to V(G)}$ is a graph homomorphism.
Throughout, when we talk about the copies of a graph inside another graph, we are talking about graph homomorphisms. 
If $\mathcal{G} = \{G_n\}_{n\geq 0}$ is a sequence of graphs, let $t(F, n) := t_{G_n}(F)$, and let $t(F) = t_{\mathcal{G}}(F) := \lim_{n \to \infty} t(F, n)$, if the limit exists.
Also, if $t(e)$ exists for some sequence, it will be called the density of the sequence. In particular, every quasirandom sequence of graphs has a density.
All the sequences $\mathcal{G}$ are assumed to be increasing in order. In fact, we shall assume that $|V(G_n)| = n$, which can be done without loss of generality.\\

To put the definition of forcing into this notation, a set of graphs $\mathcal{H}$ is said to be forcing if for any sequence of graphs $\mathcal{G}$, and any fixed constant $p \in (0, 1)$, the fact that $t(H) = p^{|E(H)|}$ for all graphs $H \in \mathcal{H}$ implies that the sequence $\mathcal{G}$ is quasirandom. 
Having fixed a sequence $\mathcal{G}$, for a graph $F$,  let $\#F := hom(F, G_n)$ be the number of copies  of $F$ in $G_n$ (which makes $\#F$ a function of $n$).
In particular, if $t(F)$ exists, then $\#F = (t(F)+o(1))n^{|V(F)|}$. 
\\

Since the graphs that will be counted are implicitly labelled (some of the labels are shown in the Figure \ref{fig:graphs}), one can look at joint copies of two graphs, say $F_1$ and $F_2$.
By the term joint copy, we mean a pair of homomorphisms, one from $F_1$ into $G_n$ and the other from $F_2$ into $G_n$, where the homomorphisms agree on vertices with the same labels. 
The number of such joint copies of $F_1$ and $F_2$ will be denoted by $\#\{F_1, F_2\}$. Furthermore, the number of joint copies of $F_1$ and $F_2$ once a copy of $F_2$ has been fixed will be denoted by $\#\{F_1 \mid F_2\}$.
Finally, for a graph $F$, and a fixed sequence $\mathcal{G}$, or more precisely a fixed $G_n \in \mathcal{G}$, $\sum_{F}$ stands for the sum over the copies of $F$ in $G_n$.\\

Before proceeding further, in order to reinforce the notation consider the following. 
For any graphs $F$ and $H$, one has $$\#\{F, H\}=\sum_H \# \{F \mid H\}.$$
Indeed, this is just the law of total probability. Moreover, if $H \subset F$, then $\#F = \#\{F, H\}$.

\section{Construction}
\label{section:construction}
In order to describe the construction of the graphs we shall use, we introduce the following notation. Firstly, for a graph $F$, and a set $I$, define the \emph{doubling} of $F$ over $I$, denoted by $db_I(F)$, to be the graph obtained by taking two disjoint copies of $F$ and identifying vertices with the same label if that label is from the set $I$. 
In $db_I(F)$ vertices from $F$ with labels in $I$ preserve their labels, while other vertices are labelled arbitrarily. 
The brackets in this notation will often be dropped. For example, if $I = \{a, b\}$, we shall write $db_{a,b}$ instead of $db_{\{a, b\}}$. 
Secondly, for a graph $F$ that has a vertex labelled $0$ and a positive integer $k$, define $F^{(k)}$ to be a graph consisting of a copy of $F$ and $k$ disjoint new vertices, labelled $1$ through $k$, with new edges connecting these vertices to the vertex with label $0$ in the copy of $F$, which preserves its label. 
This notation will be abused, in that $F'$ and $F''$ stand for $F^{(1)}$ and $F^{(2)}$, respectively. \\

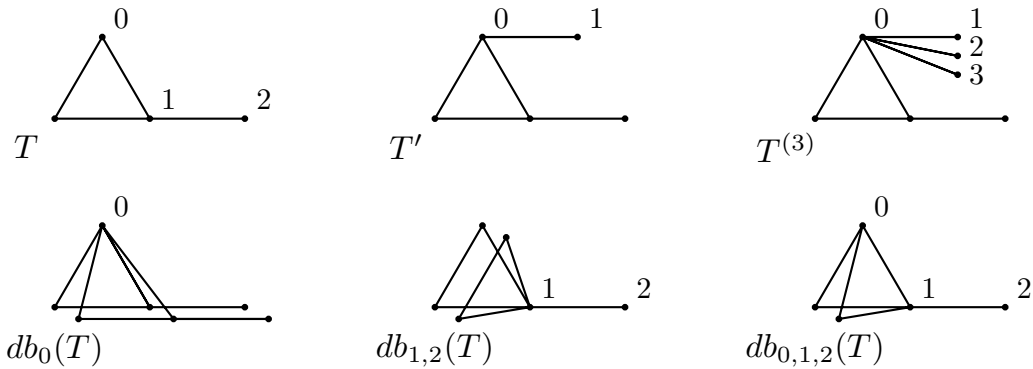
\begin{figure}[h!]
    \centering
    \def\ra{0.5}
\def\rb{0.57735026919}
\def\rc{1/2}
\def\rd{0.15}
\def\re{0.012}
\def\rf{0.45}
\def\rg{0.10}
\def\rt{0.125}

\begin{tikzpicture}[thick,scale=2.5, every node/.style={transform shape}]

\draw (1, -1) -- ++(-1, 0) -- ++(0.25, 0.43301270189) -- ++(0.25, -0.43301270189);

\draw (3, -1) -- ++(-1, 0) -- ++(0.25, 0.43301270189) -- ++(0.5, 0) -- ++(-0.5, 0)  -- ++(0.25, -0.43301270189);

\draw (5, -1) -- ++(-1, 0) -- ++(0.25, 0.43301270189) -- ++(0.5, 0) -- ++(-0.5, 0) -- ++(0.5, -\rg) -- ++(-0.5, \rg) -- ++(0.5, -2 * \rg) -- ++(-0.5, 2 * \rg)  -- ++(0.25, -0.43301270189);

\draw (1, -2) -- ++(-1, 0) -- ++(0.25, 0.43301270189) -- ++(0.25, -0.43301270189) -- ++(-0.25, +0.43301270189) -- ++(0.25 + \rt, -0.43301270189 - \rt/2) -- ++(0.5, 0) -- ++(-1, 0) -- ++(0.25 - \rt, +0.43301270189 + \rt/2);

\draw (3, -2) -- ++(-1, 0) -- ++(0.25, 0.43301270189) -- ++(0.25, -0.43301270189) -- ++(-0.25 + \rt, +0.43301270189 - \rt/2) -- ++(-0.25, -0.43301270189) -- ++(0.5 - \rt, 0 + \rt/2) ;

\draw (5, -2) -- ++(-1, 0) -- ++(0.25, 0.43301270189) -- ++(0.25, -0.43301270189) -- ++(-0.5 + \rt, +0 - \rt/2) -- ++(0.25 - \rt, +0.43301270189 + \rt/2);


\filldraw[black] (0, -1) circle (\re) node[scale=\rf] at +(\rg,\rg) {};

\filldraw[black] (1 * 0.5 , - 2 * 0.5) circle (\re) node[scale=\rf] at +(\rg, \rg) {1};

\filldraw[black] (2 * 0.5 , - 2 * 0.5) circle (\re) node[scale=\rf] at +(\rg,\rg) {2};

\filldraw[black] (4 * 0.5 , - 2 * 0.5) circle (\re) node[scale=\rf] at +(\rg,\rg) {};

\filldraw[black] (5 * 0.5 , - 2 * 0.5) circle (\re) node[scale=\rf] at +(\rg,\rg) {};

\filldraw[black] (6 * 0.5 , - 2 * 0.5) circle (\re) node[scale=\rf] at +(\rg,\rg) {};

\filldraw[black] (8 * 0.5 , - 2 * 0.5) circle (\re) node[scale=\rf] at +(\rg,\rg) {};

\filldraw[black] (9 * 0.5 , - 2 * 0.5) circle (\re) node[scale=\rf] at +(\rg,\rg) {};

\filldraw[black] (10 * 0.5 , - 2 * 0.5) circle (\re) node[scale=\rf] at +(\rg,\rg) {};

\filldraw[black] (0.25 + 0 * 0.5 , - 1 + 0.43301270189) circle (\re) node[scale=\rf] at +(\rg,\rg) {0};

\filldraw[black] (0.25 + 4 * 0.5 , - 1 + 0.43301270189) circle (\re) node[scale=\rf] at +(\rg,\rg) {0};

\filldraw[black] (0.25 + 8 * 0.5 , - 1 + 0.43301270189) circle (\re) node[scale=\rf] at +(\rg,\rg) {0};

\filldraw[black] (0.25 + 5 * 0.5 , - 1 + 0.43301270189) circle (\re) node[scale=\rf] at +(\rg,\rg) {1};

\filldraw[black] (0.25 + 9 * 0.5 , - 1 + 0.43301270189 - 0 * \rg) circle (\re) node[scale=\rf] at +(\rg,1*\rg) {1};

\filldraw[black] (0.25 + 9 * 0.5 , - 1 + 0.43301270189 - 1 * \rg) circle (\re) node[scale=\rf] at +(\rg,0.5*\rg) {2};

\filldraw[black] (0.25 + 9 * 0.5 , - 1 + 0.43301270189 - 2 * \rg) circle (\re) node[scale=\rf] at +(\rg,0*\rg) {3};

\filldraw[black] (0 + 0, -2) circle (\re) node[scale=\rf] at +(\rg,\rg) {};

\filldraw[black] (1 * 0.5 + 0, - 2) circle (\re) node[scale=\rf] at +(\rg, \rg) {};

\filldraw[black] (2 * 0.5 + 0, - 2) circle (\re) node[scale=\rf] at +(\rg,\rg) {};

\filldraw[black] (0.25 + 0, - 2 + 0.43301270189) circle (\re) node[scale=\rf] at +(\rg,\rg) {0};

\filldraw[black] (1 * 0.5 + 0 + \rt, - 2 - \rt/2) circle (\re) node[scale=\rf] at +(\rg, \rg) {};

\filldraw[black] (2 * 0.5 + 0 + \rt, - 2 -\rt/2) circle (\re) node[scale=\rf] at +(\rg,\rg) {};

\filldraw[black] (0 + 0 + \rt, -2 - \rt/2) circle (\re) node[scale=\rf] at +(\rg,\rg) {};

\filldraw[black] (0 + 2, -2) circle (\re) node[scale=\rf] at +(\rg,\rg) {};

\filldraw[black] (1 * 0.5 + 2, - 2) circle (\re) node[scale=\rf] at +(\rg, \rg) {1};

\filldraw[black] (2 * 0.5 + 2, - 2) circle (\re) node[scale=\rf] at +(\rg,\rg) {2};

\filldraw[black] (0.25 + 2, - 2 + 0.43301270189) circle (\re) node[scale=\rf] at +(\rg,\rg) {};

\filldraw[black] (0 + 2 + \rt, -2 - \rt/2) circle (\re) node[scale=\rf] at +(\rg,\rg) {};

\filldraw[black] (0.25 + 2 + \rt, - 2 + 0.43301270189 - \rt/2) circle (\re) node[scale=\rf] at +(\rg,\rg) {};

\filldraw[black] (0 + 4, -2) circle (\re) node[scale=\rf] at +(\rg,\rg) {};

\filldraw[black] (1 * 0.5 + 4, - 2) circle (\re) node[scale=\rf] at +(\rg, \rg) {1};

\filldraw[black] (2 * 0.5 + 4, - 2) circle (\re) node[scale=\rf] at +(\rg,\rg) {2};

\filldraw[black] (0.25 + 4, - 2 + 0.43301270189) circle (\re) node[scale=\rf] at +(\rg,\rg) {0};

\filldraw[black] (0 + 4 + \rt, -2 - \rt/2) circle (\re) node[scale=\rf] at +(\rg,\rg) {};


\node[scale=\rc] at ($(0, -1) + (-\rd, -\rd)$) {$T$};

\node[scale=\rc] at ($(2, -1) + (-\rd, -\rd)$) {$T'$};

\node[scale=\rc] at ($(4, -1) + (-\rd, -\rd)$) {$T^{(3)}$};

\node[scale=\rc] at ($(0, -2) + (0, -\rd * 1.5)$) {$db_0(T)$};

\node[scale=\rc] at ($(2, -2) + (0, -\rd * 1.5)$) {$db_{1,2}(T)$};

\node[scale=\rc] at ($(4, -2) + (0, -\rd * 1.5)$) {$db_{0,1,2}(T)$};


\end{tikzpicture}
    \caption{An example of the constructions described.}
    \label{fig:operations}
\end{figure}


Returning to the example from the previous section, if $H \subset F$, for some graphs $H$ and $F$, then $$ \#F = \#\{F, H\}=\sum_H \# \{F \mid H\}.$$
Furthermore, in that case, one has 
$$ \#\{db_{V(H)}(F) \mid H\} = \#\{F \mid H\}^2,$$
which implies
$$ \#db_{V(H)}(F) = \sum_H \#\{F \mid H\}^2.$$\\

Before constructing the graphs, we need to define the labeling of graphs that will be used. 
Let $v$ be a graph consisting of a single vertex, labelled $0$; let $\overline{e}$ be a graph consisting of two disconnected vertices, labelled $1$ and $2$; and let $e$ be a graph consisting of a single edge with vertices labelled $0$ and $1$, i.e. $e = v'$.
Finally, assume that $T$, the arbitrary connected non-bipartite graph, has a vertex labelled $0$. 
Having defined the above, the following result implies Theorem \ref{theorem:main}.

\begin{theorem}
\label{theorem:explicit}
    Let $\mathcal{H}$ be any of the following 
    \begin{itemize}
        \item [-] $\mathcal{H}_1(T) := \{T', db_{0,1}(T'), C_{2N}\}$,
        \item [-] $\mathcal{H}_2(T) := \{e, T', db_{1,2}(db_0^2(T)'')\}$
        \item [-] or $\mathcal{H}_3(T) := \{T, T', db_{1, 2}(db_0(T^{(N)})'')\}$.
    \end{itemize}    
    Then $\mathcal{H}$ is forcing, and no two graphs in $\mathcal{H}$ are forcing.
\end{theorem}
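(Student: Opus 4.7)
The plan is to treat the forcing direction and the non-forcing direction separately, using a common toolkit for each. The cornerstone of the forcing direction is the Cauchy-Schwarz identity already recorded in the excerpt: for any subgraph $H \subset F$,
$$(\#F)^2 = \left(\sum_{H} \#\{F \mid H\}\right)^2 \leq \#H \cdot \sum_{H} \#\{F \mid H\}^2 = \#H \cdot \#db_{V(H)}(F).$$
If all three counts in the forcing hypothesis of some $\mathcal{H}_i(T)$ take their prescribed values $t(H) = p^{m(H)}$, then a suitable instance of this inequality becomes a near-equality, and the near-equality form of Cauchy-Schwarz promised in the first appendix yields that the fiber counts $\#\{F \mid H\}$ are essentially equal to their average $\#F / \#H$ for almost every copy of $H$. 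I would chain such uniformity statements, starting from the outermost doubled graph in each family, and propagate them along the distinguished pendant edges and, for $\mathcal{H}_2$ and $\mathcal{H}_3$, through the inner doublings $db_0^2(T)$ and $db_0(T^{(N)})$. The third graph in each family (the even cycle $C_{2N}$, the edge $e$, or $T$ itself) pins down the basic edge density and closes the argument, letting one extract the correct value of $t(C_4)$ and hence deduce quasirandomness via condition $\mathit{P4}$ of Chung-Graham-Wilson.

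For the non-forcing direction, the plan is to invoke Proposition \ref{prop:condition}, which will be proved in Section \ref{section:non-forcing}, for each of the three pairs drawn from an $\mathcal{H}_i(T)$. Concretely, for each pair $\{H_1, H_2\}$ I would exhibit a one- or two-parameter family of non-quasirandom sequences of graphs whose homomorphism counts can be tuned to match any prescribed values $(p^{m(H_1)}, p^{m(H_2)})$. A natural candidate is a two-block weighted graphon, with a large block of density $\alpha$ and a smaller block of density $\beta$, the parameters $\alpha,\beta$ chosen to solve the resulting polynomial system. Since different pairs $\{H_1, H_2\}$ give different equations, solvability reduces to a non-degeneracy condition on the pair, which is precisely what Proposition \ref{prop:condition} is expected to encode in terms of $n(H)$, $m(H)$, $b(H)$ and the chromatic number; the elementary inequalities between these parameters needed to verify the condition are the ones deferred to the second appendix.

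The main obstacle I anticipate lies in the forcing direction for $\mathcal{H}_2$ and $\mathcal{H}_3$, where the graph being doubled is itself the result of a previous doubling. There the bookkeeping of which vertices are identified, and the propagation of error terms through successive applications of the near-Cauchy-Schwarz lemma, must be carried out carefully: each application introduces a small loss and one has to check that the chain of deductions stays within the regime where quasirandomness can still be concluded. A secondary difficulty is that $T$ is only assumed connected and non-bipartite, so the arguments must be uniform in $T$ and rely only on the existence of an odd cycle together with the parameters $N$, $M$ and $B$.
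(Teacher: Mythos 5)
Your plan coincides with the paper's: the forcing direction uses Jensen's inequality together with the near-equality lemma (Lemma~\ref{lemma:jensen}) propagated inward from the outermost doubling, and the non-forcing direction reduces to Proposition~\ref{prop:condition} together with the elementary inequalities verified in Appendix~\ref{section:verification}. Two small calibrations: for $\mathcal{H}_1(T)$ the paper concludes directly from $\{e, C_{2N}\}$ being a forcing pair (no near-equality lemma and no $C_4$ extraction are needed there), and Proposition~\ref{prop:condition} is stated in terms of the max-cut parameter $b(\cdot)$ rather than the chromatic number.
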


The above theorem is a direct consequence of Lemmas \ref{lemma:triple_first}, \ref{lemma:triple_second} and \ref{lemma:triple_third}, from Section \ref{section:forcing}, which show the triples are forcing, and Theorem \ref{theorem:non-forcing} from Section \ref{section:non-forcing} which shows that the pairs within the triples are not forcing.

\begin{figure}[h!]
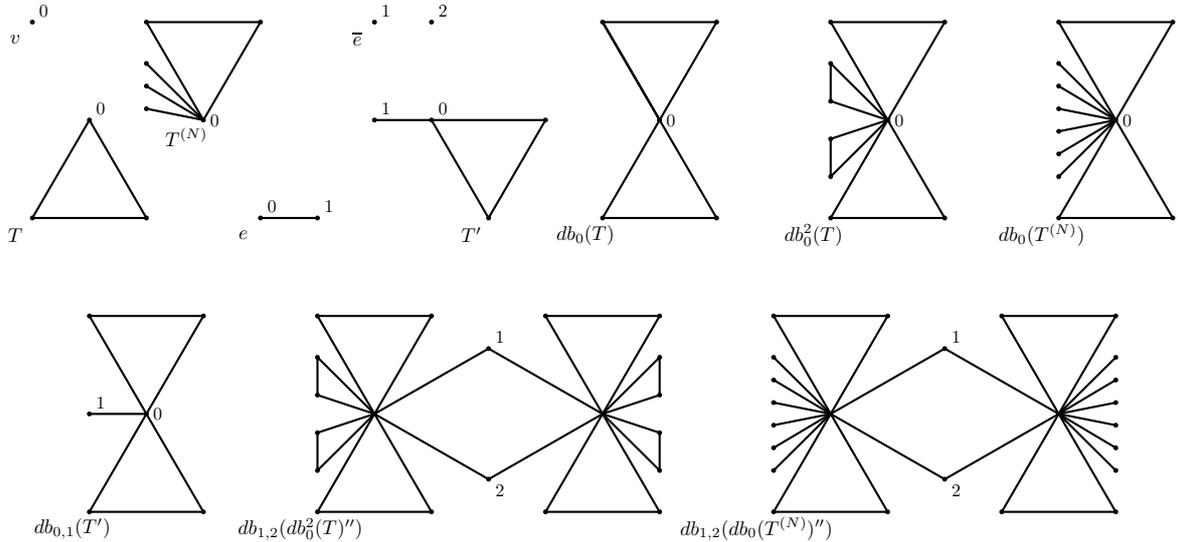

  \centering
  \subfile{figures/graphs.tex}
  \caption{The graphs used, in the case where $T$ is a triangle $K_3$.}
  \label{fig:graphs}
\end{figure}


\section{Forcing triples}
\label{section:forcing}

As mentioned in the introduction, the best known forcing set is the pair $\{e, C_4\}$, introduced in \cite{CGW}, and in the same paper it was shown that each pair $\{e, C_{2k}\}$ for $k > 2$ is forcing as well.
All the triples presented in this note force quasi-randomness by forcing the homomorphism counts of graphs belonging to such a pair. 
Before we prove the lemmas, consider the following proposition, which will reinforce the notation.
\begin{proposition}
    For a sequence of graphs $\mathcal{G}$, and a constant $p \in (0, 1)$, if $\#e \geq pn^2 + o(n^2)$ then $\#C_4 \geq p^4n^4 + o(n^4)$.
\end{proposition}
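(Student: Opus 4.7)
The plan is to establish the stronger pointwise inequality $\#C_4 \geq (\#e)^4/n^4$, valid for every $G_n$ with no error term, and then simply substitute the hypothesis $\#e \geq pn^2 + o(n^2)$. This stronger inequality is a classical two-step application of the Cauchy--Schwarz inequality; the proposition is evidently included to exercise the notation introduced in Section \ref{section:notation}, so the proof should be short and direct.

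First, I would rewrite $\#C_4$ by summing over the antipodal pair of vertices of the $4$-cycle. Labelling the cycle's vertices cyclically as $v_0, v_1, v_2, v_3$ and summing over the choice of $(v_0, v_2)$ first, one obtains
$$\#C_4 \;=\; \sum_{u,w \in V(G_n)} |N(u) \cap N(w)|^2,$$
since, once $u = v_0$ and $w = v_2$ are fixed, the vertices $v_1$ and $v_3$ are chosen independently from $N(u) \cap N(w)$. Cauchy--Schwarz over the $n^2$ ordered pairs then yields
$$\#C_4 \;\geq\; \frac{1}{n^2}\Bigl(\sum_{u,w} |N(u) \cap N(w)|\Bigr)^2.$$
Swapping the order of summation in the inner sum gives $\sum_{u,w} |N(u) \cap N(w)| = \sum_{x} d(x)^2$, which in this paper's notation is $\#P_2$ (where $P_2$ denotes the two-edge path).

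Second, a further application of Cauchy--Schwarz to the degree sequence gives $\sum_{x} d(x)^2 \geq \frac{1}{n}\bigl(\sum_{x} d(x)\bigr)^2 = (\#e)^2/n$, using the identity $\sum_{x} d(x) = \#e$. Chaining the two bounds produces
$$\#C_4 \;\geq\; \frac{1}{n^2}\cdot\frac{(\#e)^4}{n^2} \;=\; \frac{(\#e)^4}{n^4}.$$
Substituting the hypothesis $\#e \geq pn^2 + o(n^2)$ into this inequality immediately gives $\#C_4 \geq p^4 n^4 + o(n^4)$.

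I do not anticipate a real obstacle here: both inequalities above are textbook Cauchy--Schwarz, and the only thing one must take care with is confirming that the unordered/ordered conventions match up so that the constants cancel cleanly (which they do, because the paper's $\#e$ counts ordered edges and so equals $\sum_x d(x)$, and because $\#C_4$ counts homomorphisms rather than labelled subgraphs). Note that the appendix on near-equality in Cauchy--Schwarz is not required for this proposition; it will presumably be invoked later when the triples are shown to be forcing, which requires inverting such inequalities.
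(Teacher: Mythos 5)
Your proof is correct and takes essentially the same route as the paper: your two Cauchy--Schwarz applications (over antipodal pairs of $C_4$, then over vertices for the degree sum) are precisely the paper's two Jensen-inequality steps, which use $\#\{C_4\mid\overline{e}\}=\#\{P_2\mid\overline{e}\}^2$ summed over $\overline{e}$ and $\#\{P_2\mid v\}=\#\{e\mid v\}^2$ summed over $v$. The only difference is cosmetic notation (degrees and common neighbourhoods versus the paper's $\#\{\cdot\mid\cdot\}$ notation) and that you present the chain from $\#C_4$ downward rather than from $\#e$ upward.
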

\begin{proof}
    For convenience, label $C_4$ as $C_4 = db_{1,2}(v'')$. Note that $P_2 = v''$ is a path of length 2. 
    Now, observe that if one fixes $v$, then there are $\#\{e \mid v\}$ ways to extend it to an edge, and therefore there are $\#\{e \mid v\}^2$ ways to extend $v$ into $v''$. 
    In other words,
    $$ \#\{e \mid v\}^2 = \#\{P_2 \mid v\}.$$
    Thus, one can use Jensen's inequality to show 
    $$
    \left(\frac{\#e}{\#v}\right)^2 
    = \left(\frac{1}{\#v}\sum_v{\#\{e \mid v\}}\right)^2 
    \leq \frac{1}{\#v}\sum_v{\#\{e \mid v\}}^2
    = \frac{1}{\#v}\sum_v{\#\{P_2 \mid v\}}
    = \frac{\#P_2}{\#v}.
    $$
    Since $\#e \geq pn^2 + o(n^2)$ and $\#v = n$, the above inequality implies $\#P_2 \geq p^2n^3 + o(n^3)$. It is now clear that a similar bound can be obtained using $P_2$ and $C_4$. Recall that $\overline{e}$ is a graph consisting of two disconnected vertices, labelled $1$ and $2$.
    By definition, 
    $$ \#\{P_2 \mid \overline{e}\}^2 = \#\{C_4 \mid \overline{e}\},$$
    so again Jensen's inequality provides the bound    
    $$
    \left(\frac{\#P_2}{\#\overline{e}}\right)^2 
    = \left(\frac{1}{\#\overline{e}}\sum_{\overline{e}}{\#\{P_2 \mid \overline{e}\}}\right)^2 
    \leq \frac{1}{\#\overline{e}}\sum_{\overline{e}}{\#\{P_2 \mid \overline{e}\}}^2
    = \frac{1}{\#\overline{e}}\sum_{\overline{e}}{\#\{C_4 \mid \overline{e}\}}
    = \frac{\#C_4}{\#\overline{e}}.
    $$
    Thus, again, $\#P_2 \geq p^2n^3 + o(n^3)$ and $\#\overline{e} = n^2$ imply $\#C_4 \geq p^4n^4 + o(n^4)$, and the proof is complete.
\end{proof}

The above argument also shows that if $\#C_4 \leq p^4n^4 + o(n^4)$, then $\#e \leq pn^2 + o(n^2)$. In fact, these kinds of inequalities must hold between any two graphs $H_1, H_2$ such that $\{H_1, H_2\}$ is forcing. Indeed, a special case of this statement follows from Proposition \ref{prop:path} from Section \ref{section:non-forcing}. For now, it suffices to note that $\{e, C_{2a}\}$ for $a \geq 2$ is forcing \cite{CGW}, and that, as in the above example, having $\#C_{2a} \leq (p^{2a}+o(1))n^{2a}$ implies $\#e \leq (p+o(1))n^2$. \\

Keeping this in mind, we can start proving the forcing part of Theorem \ref{theorem:explicit}. We break the proof into three lemmas, each corresponding to one of the triples mentioned in the statement of Theorem \ref{theorem:explicit}, i.e. $\mathcal{H}_1$, $\mathcal{H}_2$ and $\mathcal{H}_3$. 

\begin{lemma}
\label{lemma:triple_first}
The set $\mathcal{H}_1(T)=\{T', db_{0,1}(T'), C_{2N}\}$ is forcing.
\end{lemma}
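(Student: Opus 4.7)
The plan is to deduce from the three hypothesised counts that $\#e = (p+o(1))n^2$; since $\{e, C_{2N}\}$ is a known forcing pair (valid as $N\ge 3$, because $T$ is non-bipartite) and $\#C_{2N}=(p^{2N}+o(1))n^{2N}$ is already given, quasirandomness of $\mathcal{G}$ then follows immediately from the Chung--Graham--Wilson result.

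The crux is a single Cauchy--Schwarz inequality relating $\#e$, $\#T'$ and $\#db_{0,1}(T')$. Conditioning on the image of the root vertex $0$, and writing $\#\{T\mid v\}$ for the number of copies of $T$ with $0\mapsto v$, the pendant vertex $1$ of $T'$ contributes a factor $\#\{e\mid v\}$, so
\[
\#T' \;=\; \sum_v \#\{e\mid v\}\cdot\#\{T\mid v\}, \qquad
\#db_{0,1}(T') \;=\; \sum_v \#\{e\mid v\}\cdot\#\{T\mid v\}^2,
\]
the second identity holding because once the images of $0$ and $1$ are fixed, the two copies of $T$ making up $db_{0,1}(T')$ are chosen independently. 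Applying Cauchy--Schwarz to the vectors with entries $\sqrt{\#\{e\mid v\}}$ and $\sqrt{\#\{e\mid v\}}\cdot\#\{T\mid v\}$ yields
\[
(\#T')^2 \;\le\; \#e\cdot\#db_{0,1}(T').
\]
Plugging in the hypothesised counts, and keeping track that $T'$ has $N+1$ vertices and $M+1$ edges while $db_{0,1}(T')$ has $2N$ vertices and $2M+1$ edges, this rearranges to $\#e \ge (p+o(1))n^2$.

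For the matching upper bound, I would invoke the remark immediately following the preceding Proposition: the iterated Jensen argument shows that $\#C_{2N}\le (p^{2N}+o(1))n^{2N}$ implies $\#e\le (p+o(1))n^2$. Combined with the lower bound this gives $\#e = (p+o(1))n^2$, and the forcing of $\{e, C_{2N}\}$ from \cite{CGW} finishes the proof.

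The only non-routine step is the Cauchy--Schwarz above; the construction of $db_{0,1}(T')$ is precisely designed so that this inequality is saturated against the $T'$ count, pinning $\#e$ to its correct asymptotic value. I do not anticipate any real obstacle beyond bookkeeping the vertex and edge counts of $T'$ and $db_{0,1}(T')$ so that the exponents of $p$ and $n$ match.
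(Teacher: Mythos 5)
Your proof is correct and follows essentially the same route as the paper: the paper applies Jensen's inequality conditioning on copies of $e$ (in the form $\left(\frac{1}{\#e}\sum_e \#\{T'\mid e\}\right)^2 \leq \frac{1}{\#e}\sum_e \#\{T'\mid e\}^2 = \frac{\#db_{0,1}(T')}{\#e}$), which is the same inequality as your Cauchy--Schwarz conditioned on the image of the root vertex, and then closes exactly as you do by combining the resulting lower bound on $\#e$ with the upper bound coming from $\#C_{2N}$ and invoking that $\{e,C_{2N}\}$ is forcing.
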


\begin{proof}
Fix some $p \in (0, 1)$, and some sequence of graphs $\mathcal{G}$. Assume that $t(T')$, $t(db_{0,1}(T'))$ and $t(C_{2N})$ are equal to $p^{M+1}$, $p^{2M+1}$ and $p^{2N}$, respectively. By Jensen's inequality

$$\left(\frac{1}{\#e} \#T'\right)^2 
= \left(\frac{1}{\#e} \sum_{e} \#\{T' \mid e\}\right)^2 
\leq \frac{1}{\#e} \sum_{e} (\#\{T' \mid e\})^2 
= \frac{1}{\#e} \sum_{e} \#\{db_{0,1}(T') \mid e\} 
= \frac{1}{\#e} \#db_{0,1}(T'). $$
This implies that $\#e \geq \frac{(\#T')^2}{\#db_{0,1}(T')}$. But since $\#T' = (p^{M+1}+o(1))n^{N+1}$ and $\#db_{0,1}(T') = (p^{2M+1}+o(1))n^{2N}$, this means that $\#e \geq (p+o(1))n^2$. 
On the other hand,  $ \# C_{2N} = (p^{2N}+o(1))n^{2N}$ provides the corresponding upper bound $\#e \leq (p+o(1))n^2$. Therefore $t(e) = p$, and thus $\mathcal{G}$ is quasirandom, since $\{e, C_{2N}\}$ is forcing.
\end{proof}

Before proceeding to Lemmas \ref{lemma:triple_second} and \ref{lemma:triple_third}, we state an auxiliary lemma, Lemma \ref{lemma:jensen} below, since it will be needed for the proofs of both lemmas. 
This lemma is just a minor variation of the inequalities already proved by Kohayakawa, R\"{o}dl and Sissokho \cite{KRS_04} and Skokan and Thoma \cite{ST-04}. For this reason, we do not prove it here, but for the convenience of the reader we provide a proof in Appendix \ref{section:jensen}.
Informally, the lemma states that in order for Jensen's inequality to be roughly sharp the set of points that take values that are not close to the average must have negligible weight.

\begin{restatable}{lemma}{jensen}
\label{lemma:jensen}
For every $\delta > 0$ and every integer $k \geq 2$ there exists an $\epsilon > 0$ such that for any non-negative $a_1, \dots, a_n$ and $p_1, \dots, p_n$ such that
$ \sum_{i=0}^{n} p_i = 1, $ and for any $A > 0$, the following holds. If
$$ \sum_{i=0}^{n} p_i a_i \geq A(1 - \epsilon), $$ 
and 
$$ \sum_{i=0}^{n} p_i a_i^k \leq A^k(1 + \epsilon) ,$$
then the set $I := \{0 \leq i \leq n : |a_i - A| \geq \delta A \}$ satisfies the condition 
$$ \sum_{i \in I} p_i \leq \delta. $$
\end{restatable}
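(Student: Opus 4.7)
The plan is to translate near-equality in Jensen's inequality for the convex function $f(x)=x^k$ into a concentration statement via a Bregman-type gap. Define $G(x) := x^k - A^k - kA^{k-1}(x-A)$; since $f$ is convex on $[0,\infty)$ and $G$ is the defect from its tangent line at $A$, we have $G(x)\ge 0$ for all $x\ge 0$, with equality only at $x=A$. The whole argument reduces to two ingredients: (i) the weighted sum $\sum_i p_i G(a_i)$ is forced to be small by the two hypotheses, and (ii) $G(a_i)$ is quantifiably large when $a_i$ is far from $A$.

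For (ii) I substitute $x = A(1+t)$ and set $h(t) := (1+t)^k - 1 - kt$, so that $G(x)=A^k h(t)$. Then $h$ is convex on $[-1,\infty)$ with $h(0)=h'(0)=0$, hence non-decreasing in $|t|$ away from $0$; restricting to $|t|\ge\delta$ gives
$$ h(t)\ge c(\delta,k):=\min\bigl((1+\delta)^k-1-k\delta,\;(1-\delta)^k-1+k\delta\bigr), $$
which is strictly positive for every $\delta\in(0,1)$ and every $k\ge 2$. For (i), expanding linearly gives
$$ \sum_i p_i G(a_i) \;=\; \sum_i p_i a_i^k - A^k - kA^{k-1}\Bigl(\textstyle\sum_i p_i a_i - A\Bigr). $$
The hypothesis on the $k$-th moment bounds the first difference by $\epsilon A^k$, while the hypothesis on the first moment gives $\sum_i p_i a_i - A \ge -\epsilon A$; thus the right-hand side is at most $(k+1)\epsilon A^k$.

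Combining (i) and (ii) and dropping the non-negative contributions from $i\notin I$ yields $c(\delta,k)\,A^k\sum_{i\in I}p_i \;\le\; (k+1)\epsilon\, A^k$, so choosing $\epsilon := \delta\, c(\delta,k)/(k+1)$ finishes the argument. The only non-routine step is the quantitative convexity estimate in (ii): one must verify strict positivity of $h$ at both $t=\delta$ and $t=-\delta$, the latter being the slightly less symmetric case because non-negativity of the $a_i$ only guarantees $t\ge -1$. Beyond this brief case analysis, everything is bookkeeping, and the constants are explicit enough that no compactness or non-constructive appeal is needed.
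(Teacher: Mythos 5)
Your argument is correct, and it takes a genuinely different route from the paper's. The paper first reduces the general case $k \geq 2$ to $k = 2$ by applying Jensen's inequality to the concave function $x \mapsto x^{2/k}$, which converts the hypothesis $\sum p_i a_i^k \leq A^k(1+\epsilon)$ into $\sum p_i a_i^2 \leq A^2(1+\epsilon)$; it then handles $k=2$ by expanding the weighted second moment $\sum p_i(a_i-A)^2$, bounding it above by $3\epsilon A^2$, and below by $\delta^3 A^2$ under the contrary assumption $\sum_{i\in I}p_i \ge \delta$. Your proof instead works with general $k$ directly via the Bregman gap $G(x) = x^k - A^k - kA^{k-1}(x-A)$, noting that for $k=2$ this specializes to exactly the paper's $(x-A)^2$, so the two arguments coincide in the base case and diverge only in how $k>2$ is handled. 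The paper's reduction keeps the computation minimal and produces the clean threshold $\epsilon < \delta^3/3$ uniformly in $k$; your approach avoids the reduction entirely at the cost of a $k$-dependent constant $c(\delta,k)$, and has the structural advantage of making the ``convexity defect'' interpretation explicit, which generalizes immediately to any strictly convex $f$ in place of $x^k$. One small point worth making precise in your write-up: the lower bound $h(t)\ge c(\delta,k)$ uses that the relevant range is $\{t\ge -1,\ |t|\ge\delta\}$, and when $\delta\ge 1$ the left branch is empty (and the lemma is vacuous since then $\sum_{i\in I}p_i\le 1\le\delta$); you flag this with ``the slightly less symmetric case,'' but it deserves a sentence.
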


\noindent
Armed with this lemma, we now prove the remainder of the forcing part of Theorem \ref{theorem:explicit}.

\begin{lemma}
\label{lemma:triple_second}
The set $\mathcal{H}_2(T) = \{e, T', db_{1,2}(db_0^2(T)'')\}$ is forcing.
\end{lemma}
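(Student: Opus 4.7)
The plan is to derive $t(C_4)=p^4$ from the three hypotheses and then invoke the forcing pair $\{e,C_4\}$. Writing $\alpha(u):=\#\{T\mid 0\mapsto u\}$, $d(u)$ for the degree of $u$ in $G_n$, and $\beta(x_1,x_2):=\sum_{u\sim x_1,x_2}\alpha(u)^4$, and abbreviating $H^*:=db_{1,2}(db_0^2(T)'')$, the relevant expansions are
\[ \#T'=\sum_u\alpha(u)d(u), \quad \#db_0(T)'=\sum_u\alpha(u)^2 d(u), \]
\[ \#db_0^2(T)''=\sum_u\alpha(u)^4 d(u)^2=\sum_{x_1,x_2}\beta(x_1,x_2), \quad \#H^*=\sum_{x_1,x_2}\beta(x_1,x_2)^2. \]
Three successive Cauchy--Schwarz inequalities, applied first on edge-endpoints $(u,x)$ with $u\sim x$, then on the vertex set, then on the set of ordered pairs of vertices, yield
\[ (\#T')^2\leq\#e\cdot\#db_0(T)', \quad (\#db_0(T)')^2\leq n\cdot\#db_0^2(T)'', \quad (\#db_0^2(T)'')^2\leq n^2\cdot\#H^*, \]
which chain to $(\#T')^8\leq\#e^4\cdot n^4\cdot\#H^*$. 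The hypotheses make both sides asymptotically equal to $(p^{8M+8}+o(1))n^{8N+8}$, so each of the three Cauchy--Schwarz inequalities is asymptotically tight.

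Applying Lemma \ref{lemma:jensen} to each produces concentration statements; the two I will need are \emph{(a)} $\alpha(u)\approx C_1:=p^M n^{N-1}$ for all but a $d(u)$-weighted $o(1)$-fraction of vertices, and \emph{(b)} $\alpha(u)^2 d(u)\approx C_2:=p^{2M+1}n^{2N-1}$ for all but $o(n)$ vertices. The central step is upgrading \emph{(a)} to the unweighted statement that $V_{\mathrm{good}}:=\{u:\alpha(u)=(1+o(1))C_1\}$ contains $(1-o(1))n$ vertices. Using the trivial bound $\alpha(u)\leq n^{N-1}$ in \emph{(b)} forces $d(u)=\Omega(n)$ for every vertex where \emph{(b)} holds; outside the small exceptional set from \emph{(b)}, any $u\notin V_{\mathrm{good}}$ therefore has linear degree, and the $d$-weighted control in \emph{(a)} bounds the number of such $u$'s by $o(n)$. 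Consequently $V_{\mathrm{bad}}:=V(G_n)\setminus V_{\mathrm{good}}$ satisfies $|V_{\mathrm{bad}}|=o(n)$.

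Finally, restricting the identity $\#H^*=\sum_{u,v}\alpha(u)^4\alpha(v)^4\,\mathrm{codeg}(u,v)^2$ to $V_{\mathrm{good}}^2$ gives $(1-o(1))C_1^8\sum_{(u,v)\in V_{\mathrm{good}}^2}\mathrm{codeg}(u,v)^2\leq\#H^*$, while the at most $2|V_{\mathrm{bad}}|n=o(n^2)$ pairs outside $V_{\mathrm{good}}^2$ contribute at most $o(n^4)$ to $\#C_4=\sum_{u,v}\mathrm{codeg}(u,v)^2$. Dividing by $C_1^8=p^{8M}n^{8N-8}$ and substituting the value of $\#H^*$ yields $\#C_4\leq(p^4+o(1))n^4$, matching the lower bound from the Proposition at the start of Section \ref{section:forcing}. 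Combined with $t(e)=p$, the resulting $t(C_4)=p^4$ yields quasirandomness via the forcing pair $\{e,C_4\}$. The main obstacle is the upgrade in \emph{(a)} from edge-weighted to unweighted concentration; it succeeds only because \emph{(b)} together with the trivial bound on $\alpha$ pins the degree scale of the typical vertex to $\Omega(n)$, and I expect the bookkeeping of the error terms in this step to require the most care.
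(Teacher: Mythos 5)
Your argument is correct and follows the same overall strategy as the paper's: chain Cauchy--Schwarz/Jensen inequalities between $T'$, $db_0(T)'$, $db_0^2(T)''$, and $db_{1,2}(db_0^2(T)'')$ to show asymptotic tightness, apply Lemma~\ref{lemma:jensen} to deduce that $\alpha(u)=\#\{T\mid v\}$ is concentrated for typical vertices, and then compare $\#C_4$ with $\#db_{1,2}(db_0^2(T)'')$ by counting extensions. The one genuine deviation is your handling of the concentration. You apply Lemma~\ref{lemma:jensen} twice---once with the edge-weighted measure and once with the uniform measure on vertices---and then combine (b) with the trivial bound $\alpha(u)\le n^{N-1}$ to extract the degree lower bound $d(u)=\Omega(n)$, in order to upgrade the edge-weighted $o(1)$ control from (a) to the unweighted statement $|V_{\mathrm{bad}}|=o(n)$; you correctly identify this upgrade as the delicate step. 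The paper sidesteps it entirely. It uses only the edge-weighted conclusion $\sum_{v\in B_v} d(v)=o(n^2)$: this says directly that only $o(n^2)$ edges have an endpoint in $B_v$, and since every copy of $C_4$ meeting $B_v$ must use such an edge, and each edge lies in $O(n^2)$ copies of $C_4$, only $o(n^4)$ copies of $C_4$ meet $B_v$. This avoids the second application of Lemma~\ref{lemma:jensen}, the degree lower bound, and the passage from a vertex-pair decomposition of $\#C_4$ to an edge decomposition. Your route is sound, but the codegree decomposition $\#C_4=\sum_{u,v}\mathrm{codeg}(u,v)^2$ forces you to need the unweighted bound, which is strictly more than the edge-based accounting requires; if you phrase the extension count per copy of $C_4$ rather than per pair of opposite vertices, the extra machinery disappears.
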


A sketch of the proof is as follows. The rough sharpness of the Jensen's inequality between $\#T'$ and $\#db_0(T)'$ will be used to apply Lemma \ref{lemma:jensen}. 
This will show that almost all copies of $e$ are part of a correct number of copies of $T'$, i.e. for almost all copies of $e$ the value $\#\{T' \mid e\}$ is correct. 
After that, one can provide an upper bound on $\#C_4$, since almost all copies of $C_4$ can be extended to a correct number of copies of $db_{1,2}(db_0^2(T)'')$.

\begin{proof}
Fix some $p \in (0, 1)$, and some sequence of graphs $\mathcal{G}$. Assume that $t(e)$, $t(T')$ and $t(db_{1,2}(db_0^2(T)''))$ are equal to $p$, $p^{M+1}$ and $p^{8M+4}$, respectively. 
Similarly to what was done in the previous lemma, Jensen's inequality implies
$$ \left(\frac{1}{\#v}\#db_0(T)'\right)^2 \leq \frac{1}{\#v} \#db_0^2(T)''$$
and
$$ \left(\frac{1}{\#{\overline{e}}}\#db_0^2(T)''\right)^2 \leq \frac{1}{\#{\overline{e}}} \# db_{1,2}(db_0^2(T)'').$$
Therefore, since $\# db_{1,2}(db_0^2(T)'') = (p^{8M+4}+o(1))n^{8N-4}$, it follows that $\# db_0^2(T)'' \leq (p^{4M+2}+o(1))n^{4N-1}$, and finally $\#db_0(T)' \leq (p^{2M+1}+o(1))n^{2N}$. 
Now, note that 
$$ \sum_{v}\frac{\#\{e\mid v\}}{\#e} = \frac{\#e}{\#e} = 1,$$
$$ \sum_{v} \frac{\#\{e \mid v\}}{\#e} \#\{T \mid v\} = \frac{\#T'}{\#e} = (p^M+o(1))n^{N-1},$$
and 
$$ \sum_{v} \frac{\#\{e \mid v\}}{\#e} (\#\{T \mid v\})^2 = \frac{\#db_0(T)'}{\#e} \leq (p^{2M}+o(1))n^{2N-2}.$$
In particular, the conditions of Lemma \ref{lemma:jensen} are met, 
where the weights $p_i$ from the lemma are the proportions $\frac{\#\{e\mid v\}}{\#e}$, and the values $a_i$ from the lemma are the conditional counts $\#\{T \mid v\}$.
Applying Lemma \ref{lemma:jensen} gives us that the set $B_v$ of the ``bad" vertices, i.e. those vertices that are not in $(p^{M}+o(1))n^{N-1}$ copies of $T$, has weight $o(1)$. 
In other words, $\sum_{v \in B_v} \frac{\#\{e \mid v\}}{\#e} = o(1)$, or equivalently $\sum_{v \in B_v} \#\{e \mid v\} = o(n^2)$. 
Observe also, that for any $v \not\in B_v$, not only does $\#\{T \mid v\} = (p^{M}+o(1))n^{N-1}$ but also $\#\{db_0^2(T) \mid v\} = (p^{4M}+o(1))n^{4N-4}$. \\

Denote by $B_e$ the set of ``bad" edges, i.e. those edges that have at least one vertex in $B_v$. 
Note that $|B_e| = o(n^2)$, and thus, since each edge can be in at most $O(n^2)$ copies of $C_4$, there are $o(n^4)$ copies of $C_4$ that contain an edge in $B_e$. 
This implies that there are $o(n^4)$ copies of $C_4$ that contain a vertex in $B_v$. 
On the other hand, each of the remaining $\#C_4 - o(n^4)$ copies of $C_4$ has no vertices in $B_v$, and thus forms $((p^{4M}+o(1))n^{4N-4})^2$ unique copies of $db_{1,2}(db_0^2(T)'')$. 
Putting this together yields

$$ (\# C_4 - o(n^4))((p^{4M}+o(1))n^{4N-4})^2 \leq \#db_{1,2}(db_0^2(T)'') = (p^{8M+4}+o(1))n^{8N-4},$$
which implies that $\#C_4 \leq (p^4+o(1))n^4$.  This, together with the fact that $\#e = (p+o(1))n^2$, implies $\mathcal{G}$ is quasirandom, since $\{e, C_4\}$ is forcing.
\end{proof}

\begin{lemma}
\label{lemma:triple_third}
The set $\mathcal{H}_3(T) = \{T, T', db_{1,2}(db_0(T^{(N)})'')\}$ is forcing.
\end{lemma}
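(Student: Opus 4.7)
The plan is to mimic the proof of Lemma \ref{lemma:triple_second}, with $\#T$ and $\#T'$ jointly filling the role that $\#e$ played there, and the tightness of a weighted power-mean inequality replacing that of an elementary Cauchy-Schwarz. The goal is to deduce $\#e=(p+o(1))n^2$ and $\#C_4\le(p^4+o(1))n^4$, after which quasirandomness follows because $\{e,C_4\}$ is forcing.

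First I would derive $\#T^{(N+1)}=(p^{M+N+1}+o(1))n^{2N+1}$ as an equality. Two Jensen applications on $\#db_{1,2}(db_0(T^{(N)})'')$ yield $\#db_0(T^{(N)})''\le(p^{2M+2N+2}+o(1))n^{4N+1}$, and a further Cauchy-Schwarz $(\sum_v d(v)^{N+1}\#\{T\mid v\})^2\le n\cdot\#db_0(T^{(N)})''$ gives $\#T^{(N+1)}\le(p^{M+N+1}+o(1))n^{2N+1}$. Hölder with weights $\#\{T\mid v\}/\#T$ and values $d(v)$ then yields the matching lower bound $\#T^{(N+1)}\ge(\#T')^{N+1}/(\#T)^N=(p^{M+N+1}+o(1))n^{2N+1}$. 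I would then invoke Lemma \ref{lemma:jensen} twice: once on the tight Hölder step (with $k=N+1$, weights $\#\{T\mid v\}/\#T$, values $d(v)$, $A=pn$) to get $\sum_{v\in B_v}\#\{T\mid v\}\le\delta\#T$ for $B_v:=\{v:|d(v)-pn|\ge\delta pn\}$, and once on the tight Cauchy-Schwarz (with $k=2$, uniform weights $1/n$, values $f(v):=d(v)^{N+1}\#\{T\mid v\}$, $A=p^{M+N+1}n^{2N}$) to get $|B_1|\le\delta n$ for $B_1:=\{v:|f(v)-A|\ge\delta A\}$. For any $v$ outside $B_v\cup B_1$, then, both $d(v)\approx pn$ and $\#\{T\mid v\}\approx p^Mn^{N-1}$.

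The main obstacle I anticipate is controlling the high-degree bad set $B_v^{high}:=B_v\cap\{d(v)>(1+\delta)pn\}$. The weighted bound from the first application of Lemma \ref{lemma:jensen} does not directly bound its size, because high-degree vertices can have small $\#\{T\mid v\}$ and contribute little to $\sum_{v\in B_v}\#\{T\mid v\}$. The key observation I would use is that for every $v\notin B_1$, the relation $d(v)^{N+1}\#\{T\mid v\}\approx A$ together with the trivial bound $d(v)\le n$ yields the effective \emph{lower} bound $\#\{T\mid v\}\ge(1-\delta)p^{M+N+1}n^{N-1}$, uniform in $v$. Combining with $\sum_{v\in B_v}\#\{T\mid v\}\le\delta p^M n^N$ forces $|B_v^{high}\setminus B_1|=O(\delta/p^{N+1})\,n$; an analogous argument bounds $|B_v^{low}\setminus B_1|=O(\delta)\,n$. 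Hence the good set $V_g:=V\setminus(B_v\cup B_1)$ satisfies $|V_g|=n-o(n)$.

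The remainder is routine. Since $|V_g|=n-o(n)$ and every $v\in V_g$ has $d(v)\approx pn$, summing gives $\#e\ge\sum_{v\in V_g}d(v)\ge(p-o(1))n^2$. For $\#C_4$, copies involving any vertex outside $V_g$ contribute $o(n^4)$, while for each ``good'' copy of $C_4$ with corners $v_0,v_0'\in V_g$ the product $\#\{T\mid v_0\}^2d(v_0)^{2N}\cdot\#\{T\mid v_0'\}^2d(v_0')^{2N}\approx p^{4M+4N}n^{8N-4}$ counts decorations extending it to a copy of $db_{1,2}(db_0(T^{(N)})'')$; the assumption $\#db_{1,2}(db_0(T^{(N)})'')=(p^{4M+4N+4}+o(1))n^{8N}$ then forces $\#C_4\le(p^4+o(1))n^4$. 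The upper bound on $\#C_4$ in turn yields $\#e\le(p+o(1))n^2$ by the inequality noted in the introduction, matching the lower bound. By Jensen we also have $\#C_4=(p^4+o(1))n^4$, and the forcing of $\{e,C_4\}$ then gives quasirandomness of $\mathcal{G}$.
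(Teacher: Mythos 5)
Your proposal is correct and follows the same general blueprint as the paper---two applications of Lemma \ref{lemma:jensen} to produce a small ``bad'' vertex set, followed by the combinatorial counting that gives $\#e \geq (p+o(1))n^2$ and $\#C_4 \leq (p^4+o(1))n^4$---but the second application is genuinely different. The paper applies Lemma \ref{lemma:jensen} with uniform weights to the values $\#\{T\mid v\}$ (with $k=2$), which requires first establishing the upper bound $\#db_0(T)\leq(p^{2M}+o(1))n^{2N-1}$; this is done by a separate counting argument (few copies of $db_0(T)$ have a bad centre, and each good one extends to the expected number of copies of $db_0(T^{(N)})''$). You instead apply it with uniform weights to $f(v)=d(v)^{N+1}\#\{T\mid v\}$, whose second moment is exactly $\#db_0(T^{(N)})''/n$ and is already bounded by the first Jensen step, so the intermediate bound on $\#db_0(T)$ is not needed. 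The trade-off is that you must then do a little extra work (your ``main obstacle'') to convert weighted smallness of $B_v$ into cardinality smallness, using the observation that $v\notin B_1$ and $d(v)\le n$ forces a lower bound on $\#\{T\mid v\}$; the paper reaches the analogous conclusion by instead noting that $v\in B_v^e\setminus B_v^T$ already has $\#\{T\mid v\}\approx p^Mn^{N-1}$. Both routes reach $|B_v\cup B_1|=o(n)$ (equivalently $|B_v^e\cup B_v^T|=o(n)$) and the endgame is identical. Your approach buys a cleaner second moment (no detour through $\#db_0(T)$); the paper's buys a slightly simpler passage from weight-$o(1)$ to size-$o(n)$. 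One small remark: the Hölder lower bound $\#T^{(N+1)}\geq(\#T')^{N+1}/(\#T)^N$ that you derive is exactly the ``$\sum p_ia_i\geq A(1-\epsilon)$'' hypothesis needed for your second invocation of Lemma \ref{lemma:jensen}, so while it may look like an optional strengthening it is in fact load-bearing.
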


In the proof of this lemma, approximate equality in Jensen's inequality will be used to apply Lemma \ref{lemma:jensen} twice.
This will result in our proving that for almost all vertices $v$, both $\#\{e \mid v\}$ and $\#\{T \mid v\}$ are of roughly the expected size. 
After that, a lower bound on $\#e$ and an upper bound on $\#C_4$ is provided. The former follows from the fact that counts $\#\{e \mid v\}$ are approximately correct for almost all $v$, while the latter comes from the fact that almost all copies of $C_4$ can be extended to the correct number of copies of $db_{1,2}(db_0(T^{(N)})'')$.

\begin{proof}
Fix some $p \in (0, 1)$, and some sequence of graphs $\mathcal{G}$. 
Assume that $t(T)$, $t(T')$ and $t(db_{1,2}(db_0(T^{(N)})''))$ are $p^M$, $p^{M+1}$ and $p^{4M+4+4N}$, respectively. 
As in the previous two proofs, using Jensen's inequality one obtains 

$$ \left(\frac{1}{\#{\overline{e}}}\#db_0(T^{(N)})'' \right)^2 \leq \frac{1}{\#{\overline{e}}}\#db_{1,2}(db_0(T^{(N)})'') $$
and
$$ \left(\frac{1}{\#v}\#T^{(N+1)} \right)^2 \leq \frac{1}{\#v}\#db_0(T^{(N)})''. $$
Thus, since $\#db_{1,2}(db_0(T^{(N)})'') = (p^{4M+4+4N}+o(1))n^{8N}$, it follows that $\#db_0(T^{(N)})'' \leq (p^{2M+2+2N}+o(1))n^{4N+1},$ and $\#T^{(N+1)} \leq (p^{M+1+N}+o(1))n^{2N+1}$. This allows us to use Lemma \ref{lemma:jensen} as follows. Since
$$\sum_{v}\frac{\#\{T \mid v\}}{\#T} = 1, $$
$$\sum_{v}\frac{\#\{T \mid v\}}{\#T} \#\{e \mid v\} = \frac{\#T'}{\#T} = (p+o(1))n,$$
and
$$\sum_{v}\frac{\#\{T \mid v\}}{\#T} (\#\{e \mid v\})^{N+1} = \frac{\#T^{(N+1)}}{\#T} \leq (p^{N+1}+o(1))n^{N+1},$$
the conditions of Lemma \ref{lemma:jensen} are met. Applying Lemma \ref{lemma:jensen} gives us that the set $B_v^e$ of the ``bad" vertices, i.e., those that are not in $(p+o(1))n$ edges has weight $o(1)$. 
In other words, $\sum_{v \in B_v^e} \frac{\#\{T \mid v\}}{\#T} = o(1)$, and therefore only $o(n^N)$ copies of $T$ have their $v$ belonging to $B_v^e$. 
But, every $T$ is in $O(n^{N-1})$ copies of $db_0(T)$.
Thus, only $o(n^{2N-1})$ copies of $db_0(T)$ have their $v$ belonging to $B_v^{e}$. 
However, each copy of $db_0(T)$ with $v \not\in B^e_v$ forms $(p^{2N+2}+o(1))n^{2N+2}$ unique copies of $db_0(T^{(N)})''$, since $v$ is in $(p+o(1))n$ edges.
Therefore, 
$$(\#db_0(T) - o(n^{2N-1}))(p^{2N+2}+o(1))n^{2N+2} \leq \#db_0(T^{(N)})'' \leq (p^{2M+2+2N}+o(1))n^{4N+1}, $$
and so $\#db_0(T) \leq (p^{2M}+o(1))n^{2N-1}$. 
This now feeds into another usage of Lemma \ref{lemma:jensen}. Since
$$\sum_v \frac{1}{\#v} = 1,$$
$$\sum_v \frac{1}{\#v}\#\{T \mid v\} = \frac{\#T}{\#v} = (p^{M}+o(1))n^{N-1},$$
and
$$\sum_v \frac{1}{\#v}(\#\{T \mid v\})^2 = \frac{\#db_0(T)}{\#v} \leq (p^{2M}+o(1))n^{2N-2},$$
the conditions of Lemma \ref{lemma:jensen} are met.
Lemma \ref{lemma:jensen} then implies that the set $B_v^T$ of ``bad" vertices, i.e., those vertices that are not in $(p^M+o(1))n^{N-1}$ copies of $T$ has weight $o(1)$.
The weight used is uniform, so $|B_v^T| = o(n)$. 
Observe also, that every vertex in $B_v^e \setminus B_v^T$ forms $(p^M+o(1))n^{N-1}$ unique copies of $T$.
But, there are only $o(n^N)$ copies of $T$ with $v \in B_v^e$, so one has that $|B_v^e \setminus B_v^T| = o(n)$. This means that $|B_v^e \cup B_v^T| = o(n)$, and in particular $|B_v^e| = o(n)$. \\

This now gives a lower bound on $\#e$, since each of the $(1 - o(1))n$ vertices not in $B_v^e$ provides $(p+o(1))n$ unique copies of $e$. Thus, $\#e \geq (1 - o(1))n(p+o(1))n = (p+o(1))n^2$. \\

Finally, an upper bound on $\#C_4$ can also be obtained by noting that since only $o(n)$ vertices are in $B_v^e \cup B_v^T$, then only $o(n^4)$ copies of $C_4$ have a vertex in that set. 
Each of the remaining $\#C_4 - o(n^4)$ copies of $C_4$ does not have a vertex in $B_v^e \cup B_v^T$, and thus, forms $((p+o(1))n)^{4N}((p^M+o(1))n^{N-1})^4$ unique copies of $db_{1,2}(db_0(T^{(N)})'')$.
Hence,
$$(\#C_4 - o(n^4))((p+o(1))n)^{4N}((p^M+o(1))n^{N-1})^4 \leq \# db_{1,2}(db_0(T^{(N)})'') = (p^{4M + 4 + 4N}+o(1))n^{8N},$$
and that gives $\#C_4 \leq (p^4+o(1))n^4$, which finishes the proof, again as $\{e, C_4\}$ is forcing.
\end{proof}

\section{Non-forcing pairs}
\label{section:non-forcing}

In this section, the pairs from the triples from Theorem \ref{theorem:explicit} will be shown not to be forcing.

\begin{restatable}{theorem}{nonforcing}
\label{theorem:non-forcing}
For $i \in \{1, 2, 3\}$, if $\{F_1, F_2\} \subset \mathcal{H}_i(T)$, then $\{F_1, F_2\}$ is not forcing.
\end{restatable}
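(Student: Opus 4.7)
My plan is to produce, for each pair $\{F_1, F_2\}$ appearing in some $\mathcal{H}_i(T)$, a non-quasirandom sequence of graphs whose homomorphism densities into $F_1$ and $F_2$ coincide with those of a quasirandom graph of density $p$, for a well-chosen $p \in (0, 1)$.

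The basic construction I intend to use is the \emph{two-block disjoint union}: for $\alpha \in (0, 1)$ and $q_1, q_2 \in [0, 1]$, let $G_n$ be the vertex-disjoint union of two quasirandom graphs, one of density $q_1$ on $\lfloor \alpha n \rfloor$ vertices and one of density $q_2$ on the remaining vertices. Because every homomorphism of a connected graph has its image in a single component, for every connected $F$ one has
\[
t(F, G_n) \;=\; q_1^{m(F)}\,\alpha^{n(F)} \;+\; q_2^{m(F)}\,(1-\alpha)^{n(F)} \;+\; o(1),
\]
and whenever $\alpha \in (0, 1)$ the sequence $\{G_n\}$ is not quasirandom, since the edge density between the two blocks is $0$. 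I therefore plan to state (and prove) Proposition \ref{prop:condition} along the following lines: a pair $\{F_1, F_2\}$ of connected graphs with $m(F_i) \geq 1$ fails to be forcing whenever there exist $\alpha \in (0, 1)$ and $q_1, q_2 \in [0, 1]$ satisfying
\[
q_1^{m(F_i)}\,\alpha^{n(F_i)} \;+\; q_2^{m(F_i)}\,(1-\alpha)^{n(F_i)} \;=\; p^{m(F_i)}, \qquad i = 1, 2.
\]

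With Proposition \ref{prop:condition} available, Theorem \ref{theorem:non-forcing} reduces to exhibiting an admissible triple $(\alpha, q_1, q_2)$ for each of the nine pairs. The $(n, m)$-parameters of the graphs involved are all explicit: $T'$ is $(N+1, M+1)$, $db_{0,1}(T')$ is $(2N, 2M+1)$, $C_{2N}$ is $(2N, 2N)$, $db_{1,2}(db_0^2(T)'')$ is $(8N-4, 8M+4)$, and $db_{1,2}(db_0(T^{(N)})'')$ is $(8N, 4M+4N+4)$. A convenient first attempt is the degenerate construction $q_2 = 0$, which reduces the system to $q_1 = p\,\alpha^{-n(F_i)/m(F_i)}$ and is consistent exactly when the ratios $n(F_i)/m(F_i)$ agree; this covers, for instance, $\{T, T'\}$ in the unicyclic case $M = N$. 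For all other pairs, the ratios differ, and I plan to obtain a solution by an implicit-function-theorem argument based at the corner $(\alpha, q_1, q_2) = (1, p, p)$: a short computation shows that the Jacobian of the two-equation system with respect to $(q_1, \alpha)$ at that point is proportional to $m(F_1) n(F_2) - m(F_2) n(F_1)$, which is non-zero precisely when the ratios are distinct, so perturbing off the corner yields interior solutions.

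The main technical obstacle is uniformity in $T$. The implicit-function argument is only local, and one must verify that the perturbed $(q_1, \alpha)$ remains in the admissible range $[0, 1] \times (0, 1)$ for \emph{every} connected non-bipartite $T$; similarly, for the degenerate construction the requirement $q_1 \leq 1$ imposes lower bounds on $\alpha$ that depend on $N/M$. Choosing $p$ carefully in the bulk of $(0, 1)$ and, when necessary, swapping the roles of the two blocks, should suffice in typical cases; for the remaining borderline configurations (in particular pairs involving the bipartite graph $C_{2N}$, where the non-bipartite partner forces $q_1$ and $q_2$ apart) the forward-referenced Proposition \ref{prop:path} is expected to supply an alternative algebraic inequality that yields a valid witness. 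Routine numerical inequalities arising during the case analysis will, as foreshadowed in the introduction, be collected in an appendix.
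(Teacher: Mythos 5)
Your proposed construction restricts attention to weight functions of the form $(q_1, 0, q_2)$ with block sizes $(\alpha, 1-\alpha)$, i.e.\ vertex-disjoint unions, and this is not rich enough for the problem. In that family one has $t(F) = q_1^{m}\alpha^{n} + q_2^{m}(1-\alpha)^{n}+o(1)$, a quantity that depends on $F$ only through $n(F)$ and $m(F)$. But the decisive parameter in the paper's proof is the max-cut $b(F)$, which never appears in your formula. Concretely, for the pair $\{e, T'\}\subset\mathcal{H}_2(T)$ one has, by convexity of $x\mapsto x^{M+1}$ and the bound $\alpha^{M+2}\le \alpha^{N+1}$,
\[
t(e)^{M+1}=\bigl(\alpha\cdot q_1\alpha+(1-\alpha)\cdot q_2(1-\alpha)\bigr)^{M+1}\le q_1^{M+1}\alpha^{M+2}+q_2^{M+1}(1-\alpha)^{M+2}\le t(T'),
\]
so $f(e)\le f(T')$ for \emph{every} two-block disjoint union, with equality only at the quasirandom boundary. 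The same phenomenon occurs for the pair $\{T',C_{2N}\}$: disjoint unions are intrinsically ``anti-bipartite'' (zero cross-block density), which systematically depresses $f$ on bipartite graphs relative to non-bipartite ones, so the equation $f(F_1)=f(F_2)$ is simply never attained there. You flag $C_{2N}$ as a possible difficulty and defer to Proposition~\ref{prop:path}, but that proposition by itself supplies nothing: it is an intermediate-value statement, and to use it you need a \emph{second} family of weight functions on which the sign of $f(F_1)-f(F_2)$ is reversed. That is exactly what the paper does with $(x,1,x)$ for small $x$: a noisy complete bipartite graph, for which $f(F)=\Theta\bigl(x^{1-b(F)/m(F)}\bigr)$, so bipartite graphs dominate. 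This second weight, and the resulting dependence on $b/m$, is the missing idea.

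The implicit-function step also does not work as stated. At $(\alpha,q_1,q_2)=(1,p,p)$ the system $g_i(\alpha,q_1,q_2)=q_1^{m_i}\alpha^{n_i}+q_2^{m_i}(1-\alpha)^{n_i}-p^{m_i}=0$ is satisfied identically along the whole segment $\{(1,p,q_2):q_2\in[0,1]\}$, because the $(1-\alpha)^{n_i}$ terms vanish at $\alpha=1$. Your Jacobian with respect to $(q_1,\alpha)$ is indeed nonsingular when $m_1 n_2\neq m_2 n_1$, but that only says the solution map $q_2\mapsto(q_1,\alpha)$ is locally unique near the corner, and uniqueness forces it to be the trivial branch $(q_1,\alpha)\equiv(p,1)$. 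Every solution it produces is quasirandom; you get no witness. The paper avoids this local degeneracy entirely by working globally: it computes $f$ exactly at two far-apart weights, $(1,0,1)$ giving $f(F)=(1/2)^{(n-1)/m}$ and $(x,1,x)$ giving $f(F)=\Theta(x^{1-b/m})$, verifies the two strict inequalities $\frac{n_1-1}{m_1}>\frac{n_2-1}{m_2}$ and $\frac{b_1}{m_1}>\frac{b_2}{m_2}$ for each pair (Proposition~\ref{prop:condition} and the table in Appendix~\ref{section:verification}), and then applies the intermediate-value argument of Proposition~\ref{prop:path} along a path joining them. To repair your argument you would need to introduce something playing the role of the pro-bipartite weight $(x,1,x)$ and redo the case analysis using the max-cut ratio.
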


\noindent
In this section, we shall reduce the proof of Theorem \ref{theorem:non-forcing} to the verification of a few simple inequalities. The verification itself is provided in Appendix \ref{section:verification}. \\

For each required pair of graphs, we need to construct a graph sequence that is not quasirandom but that has the correct homomorphism counts for the two graphs in the pair. 
We shall model these by finite weighted graphs with loops in the standard way, as was done by Lovász and Szegedy \cite{LS-04}. 
Let $\mathfrak{G} = (V, w)$ be a finite weighted graph with loops. Here, $V$ is the vertex set, and $w : V \times V \to [0, 1]$ is symmetric weight function. 
For every graph $F$, define $$t_\mathfrak{G}(F) := \sum_{\phi : V(F) \to V(\mathfrak{G})} \frac{1}{|V(\mathfrak{G})|^{|V(F)|}} \prod_{e = \{u, v\} \in E(F)}w(\phi(u), \phi(v)).$$
Having $\mathfrak{G}$ as above, one can obtain a sequence $\mathcal{G} = \{G_n\}_{n\geq0}$, such that almost surely $t_\mathcal{G}(F)$ is equal to $ t_\mathfrak{G}(F)$ for all finite graphs $F$. This is done as follows. First, take a function $\phi : V(G_n) \to V(\mathfrak{G})$ uniformly at random. 
Then, for each distinct $u, v \in V(G_n)$ connect them by an edge randomly and independently with probability $w(\phi(u), \phi(v))$. 
Using Azuma's inequality, one then shows that for any graph $F$, the value $t_\mathcal{G}(F)$ is equal to $ t_\mathfrak{G}(F)$ almost surely. 
Since there are countably many finite graphs up to isomorphism, the above holds simultaneously for all graphs $F$ almost surely. 
Having this in mind, we shall just discuss weighted graphs $\mathfrak{G}$, and will assume that $t(F) = t_\mathfrak{G}(F)$ since one can always generate an appropriate sequence as described. 
Note that a sequence $\mathcal{G}$ generated in this way will be quasirandom if and only if the weighted graph $\mathfrak{G}$ has a weight function $w$ that is constant almost everywhere.
\\

Before proceeding further, we introduce one further piece of notation. 
Given a graph $F$, define $f(F) := t(F)^{\frac{1}{|E(F)|}}$, and observe that if $\mathcal{G}$ is a quasirandom sequence with density $p \in (0, 1)$, and the value $t(F)$ is known, then necessarily $p = f(F)$. 
The following proposition now follows immediately from the definition of forcing.

\begin{proposition}
\label{prop:non-forcing}
Let $F_1$ and $F_2$ be graphs, and let $\mathcal{G}$ be a sequence of graphs. If $\{F_1, F_2\}$ is forcing, and $f(F_1) = f(F_2) = p \in (0, 1)$, then $\mathcal{G}$ must be quasirandom with density $p$. Equivalently, if $\mathcal{G}$ is not quasirandom and $f(F_1) = f(F_2) = p \in (0, 1)$, then  $\{F_1, F_2\}$ is not forcing.
\end{proposition}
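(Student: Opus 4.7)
The proposition is essentially a reformulation of the definition of forcing in the language of $f$, so the plan is a short and direct argument. First I would unfold the definition of $f$: the hypothesis $f(F_i) = p$ says exactly that $t(F_i) = p^{|E(F_i)|}$ for $i \in \{1,2\}$. Given that $\{F_1,F_2\}$ is assumed to be forcing and $p \in (0,1)$, the definition of a forcing set (in the form recalled at the beginning of Section \ref{section:notation}) then yields immediately that $\mathcal{G}$ is quasirandom.

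To pin down the density, I would invoke property (P3) of the Chung--Graham--Wilson theorem recalled in the introduction: any quasirandom sequence with density $p'$ satisfies $t(F) = (p')^{|E(F)|}$ for every graph $F$. Applying this with $F = F_1$ yields $(p')^{|E(F_1)|} = t(F_1) = p^{|E(F_1)|}$; since $|E(F_1)| \geq 1$ (which is implicit in $f(F_1)$ being defined at all) and both $p, p'$ lie in $(0,1)$, equality of these powers forces $p' = p$.

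The second sentence of the proposition is just the contrapositive of the first and needs no independent argument. There is no real obstacle here; the only subtlety worth flagging is the observation that $f$ is only well-defined when $|E(F_i)| \geq 1$, which is precisely what guarantees that equality of $|E(F_i)|$-th powers forces equality of the bases in $(0,1)$ in the density-identification step.
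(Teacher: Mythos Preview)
Your argument is correct and matches the paper's treatment: the paper does not give a separate proof but simply states that the proposition ``follows immediately from the definition of forcing,'' which is exactly what you spell out. Your extra remark identifying the density via property~(P3) is a harmless elaboration of what the paper leaves implicit.
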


Note that, since the values of the weight function $w$ determine $\mathfrak{G}$, we can consider $\mathfrak{G}$, $t(F)$ and $f(F)$, for any $F$, to be functions of  $w$.
When it is important to emphasise which $w$ is used, we write $\mathfrak{G}_w$, $t_w(F)$ and $f_w(F)$. 

Since the values $t(F)$ and $f(F)$ are continuous
functions of $w$ the following is immediate.

\begin{proposition}
\label{prop:path}
If for some symmetric $w_1, w_2 \in [0, 1]^{n \times n}$, and some graphs $F_1$ and $F_2$ one has
$$f_{w_1}(F_1) < f_{w_1}(F_2)$$ 
and 
$$f_{w_2}(F_1) > f_{w_2}(F_2),$$
then any path from $w_1$ to $w_2$ contains  a symmetric  $w\in [0, 1]^{n \times n}$ such that 
$$ f_{w}(F_1) = f_{w}(F_2). $$
In particular, if $n > 1$, the set $\{F_1, F_2\}$ is not forcing.
\end{proposition}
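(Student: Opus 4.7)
The proposition splits into two parts: (i) existence of a crossing on any path, and (ii) the non-forcing conclusion when $n > 1$, which requires additional care.

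For (i), I would parametrize any path from $w_1$ to $w_2$ as a continuous map $\gamma : [0, 1] \to [0, 1]^{n \times n}$ landing in symmetric matrices, with $\gamma(0) = w_1$ and $\gamma(1) = w_2$. Since $t_w(F)$ is a polynomial in the entries of $w$ and $f_w(F) = t_w(F)^{1/|E(F)|}$, the scalar function $h(s) := f_{\gamma(s)}(F_1) - f_{\gamma(s)}(F_2)$ is continuous on $[0, 1]$. By hypothesis $h(0) < 0 < h(1)$, and the Intermediate Value Theorem produces some $s^\star \in (0, 1)$ with $h(s^\star) = 0$, giving the desired $w = \gamma(s^\star)$.

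For (ii), I would apply Proposition \ref{prop:non-forcing}: it suffices to exhibit a non-quasirandom weighted graph $\mathfrak{G}_{w^\star}$ satisfying $f_{w^\star}(F_1) = f_{w^\star}(F_2) = p$ for some $p \in (0, 1)$. The crucial preliminary remark is that if $w$ is a constant weight function with value $c$, then $t_w(F) = c^{|E(F)|}$ for every graph $F$, so $f_w(F) = c$ is independent of $F$; in particular any constant $w$ satisfies $f_w(F_1) = f_w(F_2)$, which means the hypothesis forces both $w_1$ and $w_2$ to be non-constant. I would then choose the path from $w_1$ to $w_2$ to lie entirely in the open set of strictly positive, non-constant symmetric matrices. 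This is where the $n > 1$ assumption enters: after a small perturbation of $w_1, w_2$ into the interior $(0,1)^{n \times n}$ (which preserves the strict inequalities in the hypothesis by continuity of the $f$-functionals), one can connect them by a path avoiding the subspace of constant matrices, because this subspace is one-dimensional inside a $\binom{n+1}{2}$-dimensional ambient space and so has codimension at least $2$ when $n > 1$. Applying part (i) to this path yields $w^\star$ that is non-constant with strictly positive entries, so $p := f_{w^\star}(F_1) = f_{w^\star}(F_2)$ satisfies $p > 0$ (positive entries give $t_{w^\star}(F_i) > 0$) and $p < 1$ ($p = 1$ would force $w^\star \equiv 1$, contradicting non-constancy). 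The weighted graph $\mathfrak{G}_{w^\star}$ is not essentially constant, so the sequence it generates is almost surely not quasirandom while still realising $t(F_i) = p^{|E(F_i)|}$ for $i = 1, 2$, and Proposition \ref{prop:non-forcing} completes the argument.

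The main obstacle I anticipate is the simultaneous coordination in (ii) of three requirements at the crossing point: non-constancy of $w^\star$ (needed to produce a non-quasirandom sequence), strict positivity of its entries (needed for $p > 0$), and the strict bound $p < 1$ (which follows once non-constancy is secured). All three are handled by choosing a path inside the open cube $(0, 1)^{n \times n}$ that steers clear of the one-dimensional subspace of constant matrices; the hypothesis $n > 1$ provides precisely the codimension needed for such a path to exist, and explains why the $n = 1$ case must be excluded (in that case every symmetric weight function is constant and the hypothesis becomes vacuous).
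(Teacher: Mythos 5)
Your proof is correct and follows the same approach as the paper, which treats the first part as immediate from continuity (via IVT) and for the second part likewise appeals to the fact that the diagonal $D=\{(c,\dots,c)\}$ has codimension at least two in the space of symmetric weight matrices when $n>1$. The one place you go beyond the paper's sketch is the careful arrangement that the crossing point $w^\star$ yields $p=f_{w^\star}(F_1)=f_{w^\star}(F_2)$ strictly in $(0,1)$ — by routing the path through $(0,1)^{n\times n}$ so that $p>0$, and observing that $p=1$ would force $w^\star\equiv 1\in D$. This is a genuine hypothesis of Proposition \ref{prop:non-forcing} that the paper leaves implicit, so your added rigor is well placed; your observation that the hypothesis already forces $w_1,w_2\notin D$ (since a constant $w$ has $f_w(F)$ independent of $F$) is also a clean simplification of the paper's slightly weaker phrasing that the path may meet $D$ ``only at the endpoints.''
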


The last point of Proposition \ref{prop:path} boils down to the fact that any two points $x, y \in [0, 1]^k$, where $k > 2$, can be connected by a path whose only possible intersections with the line $D = \{(c, \dots, c) : c \in [0, 1]\}$ are the endpoints $x$ and $y$ themselves. 
Thus, by Proposition \ref{prop:path}, there is a $w$ for which $f_w(F_1) = f_w(F_2)$, but that is not on $D$. As discussed, only $w \in D$ correspond to quasirandom sequences. Therefore, the pair $\{F_1, F_2\}$ is not forcing, by Proposition \ref{prop:non-forcing}. 

\subsection{Weight functions}

While using Proposition \ref{prop:path} to prove Theorem \ref{theorem:non-forcing}, only two types of the weights $w$ need to be considered.
Both mentioned weights correspond to $\mathfrak{G}_w$ with only two vertices, say $V(\mathfrak{G}_w) = \{\mathfrak{0}, \mathfrak{1}\}$. 
Therefore, $w$ is determined by the values of $w(\mathfrak{0}, \mathfrak{0})$, $w(\mathfrak{0}, \mathfrak{1})$ and $w(\mathfrak{1}, \mathfrak{1})$. 
With this in mind, $w$ will be written as an array $(w(\mathfrak{0}, \mathfrak{0}), w(\mathfrak{0}, \mathfrak{1}), w(\mathfrak{1}, \mathfrak{1}))$. 
Having said this, the first weight function is going to be $(1, 0, 1)$, and the second one is going to be of the form $(x, 1, x)$ for some small $x > 0$.\\

Fix an arbitrary connected $F$ and set $n := n(F)$ and $m := m(F)$. 
Note that in the case $w = (0, 0, 1)$, one has $t(F) = \left(\frac{1}{2} \right)^n$, since the map is a homomorphism if and only if it maps all the vertices of $F$ into the second vertex. 
Similarly, in the case $w = (1, 0, 1)$ one has $t(F) = 2\left(\frac{1}{2} \right)^{n}$, as $F$ is connected. Thus, $f(F) = \left(\frac{1}{2} \right)^{\frac{n-1}{m}}$. \\

On the other hand, in the case $w = (x, 1, x)$, if $F$ is non-bipartite, then $t(F) \to 0$ as $x \to 0$. 
Thus, we look at the asymptotics of $t(F)$, or rather of $f(F)$, as $x \to 0$. 
Once more fix some arbitrary graph $F$, and set $n := n(F)$, $m := m(F)$ and $b := b(F)$. 
Recall that $b(F) := \text{max-cut}(F)$ is the number of edges in the largest bipartite subgraph of $F$,
and observe that $t(F) = \Theta(x^{m - b})$, since 
by definition 
$$t(F) = \sum_{\phi : V(F) \to V(\mathfrak{G})} \frac{1}{|V(\mathfrak{G})|^{|V(F)|}} \prod_{e = \{u, v\} \in E(F)}w(\phi(u), \phi(v)).$$
This, by the choice of $w$, shows also that $t(F)$ is a polynomial in $x$. Furthermore, by the definition of $b$ there is a bipartition of $F$ such that exactly $b$ edges have a vertex in each part.
Therefore, there is a map $\phi$ that contributes $\frac{1}{2^{n}}x^{m - b}$ to the sum.
Since  all contributions to the sum are positive monomials, this means that $t(F) = \Omega(x^{m - b})$. On the other hand, $t(F) = O(x^{m-b})$, by the maximality of $b$. 
Finally, this also implies that $f(F) = \Theta\left(x^{\frac{m - b}{m}}\right) = \Theta\left(x^{1 - \frac{b}{m}}\right)$. 
In particular, if $F$ is bipartite, then $b = m$, and thus $f(F) = \Theta(1)$.\\

These types of weight functions can be used, together with Proposition \ref{prop:path}, to show that the pairs of graphs from the statement of Theorem \ref{theorem:non-forcing} are not forcing. For convenience, we restate everything in the following proposition. 

\begin{proposition}
\label{prop:condition}
Let $F_1$ and $F_2$ be connected graphs. For $i \in \{1, 2\}$, let $n_i = n(F_i)$, $m_i = m(F_i)$ and $b_i = b(F_i)$.
\begin{itemize}
    \item [-]If $\frac{n_1 - 1}{m_1} > \frac{n_2 - 1}{m_2}$, then there is $w$ such that $f_w(F_1) < f_w(F_2)$.
    \item [-]If $\frac{b_1}{m_1} > \frac{b_2}{m_2}$, then there exists $w$ such that $f_w(F_1) > f_w(F_2)$. 
    \item [-]In particular, if both hold, the pair $\{F_1, F_2\}$ is not forcing.
\end{itemize}

\end{proposition}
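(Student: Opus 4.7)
The plan is to simply assemble the two weight-function computations carried out just before the proposition's statement, which already give $f_w$ for $w=(1,0,1)$ and for $w=(x,1,x)$ in closed form and asymptotic form, respectively; the proposition then falls out by applying Proposition \ref{prop:path} on the two-vertex weighted graph.

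First I would handle the first bullet by taking $w_1=(1,0,1)$. The preceding discussion establishes that for any connected graph $F$ with $n$ vertices and $m$ edges one has $f_{w_1}(F)=\left(\tfrac{1}{2}\right)^{(n-1)/m}$. Since $\tfrac{1}{2}<1$, the map $\alpha\mapsto (1/2)^{\alpha}$ is strictly decreasing on $(0,\infty)$, so the hypothesis $\tfrac{n_1-1}{m_1}>\tfrac{n_2-1}{m_2}$ immediately gives $f_{w_1}(F_1)<f_{w_1}(F_2)$.

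Next, for the second bullet I would take $w_2=(x,1,x)$ with $x>0$ to be chosen small. The earlier calculation shows $f_{w_2}(F_i)=\Theta\!\left(x^{1-b_i/m_i}\right)$ as $x\to 0^+$ (with the bipartite case $b_i=m_i$ giving $\Theta(1)$). The hypothesis $\tfrac{b_1}{m_1}>\tfrac{b_2}{m_2}$ is equivalent to $1-\tfrac{b_1}{m_1}<1-\tfrac{b_2}{m_2}$, hence
\[
\frac{f_{w_2}(F_1)}{f_{w_2}(F_2)}\;=\;\Theta\!\left(x^{\,b_2/m_2\,-\,b_1/m_1}\right)\;\longrightarrow\;\infty \quad\text{as } x\to 0^+.
\]
Choosing $x$ small enough that this ratio exceeds $1$, and that both $f_{w_2}(F_i)\in(0,1)$, yields $f_{w_2}(F_1)>f_{w_2}(F_2)$. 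The only small care needed here, and the only remotely nontrivial point in the argument, is ensuring that the implicit $\Theta$-constants do not spoil the comparison; this is handled just by taking $x$ small enough after noting that the exponent difference is strictly positive.

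Finally, to deduce the third bullet, note that both $w_1$ and $w_2$ are symmetric elements of $[0,1]^{2\times 2}$, so Proposition \ref{prop:path} applies with $n=2>1$: the existence of the two weights above, reversing the order of $f(F_1)$ and $f(F_2)$, forces the pair $\{F_1,F_2\}$ to be non-forcing. So the proof essentially writes itself once the preliminary calculations of $f_w(F)$ for the two families of weights have been collected, and there is no substantive obstacle.
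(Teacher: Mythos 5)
Your proof is correct and follows the same route the paper takes: the paper's ``proof'' of Proposition~\ref{prop:condition} is precisely the preceding subsection's computation of $f_w$ for $w=(1,0,1)$ and $w=(x,1,x)$, combined with Proposition~\ref{prop:path}, and you reproduce exactly that argument. The one extra remark you add, checking that the $\Theta$-constants and the requirement $p\in(0,1)$ cause no trouble for small $x$, is a reasonable bit of care but does not change the substance.
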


\noindent As discussed, the fact that this proposition implies Theorem \ref{theorem:non-forcing} is just a matter  of verifying simple inequalities, which is done in Appendix \ref{section:verification}.

\section{Concluding remarks}

Forcing triples with no forcing pairs could also be seen as minimal forcing sets, since none of their proper subsets are forcing sets. It is of course natural to consider minimal forcing sets which contain more elements. In a follow up paper we show the existence of minimal forcing sets containing any finite number of graphs. \\

Another question of interest is the following. If the forcing conjecture fails for some graph $B$, i.e. if a bipartite graph $B$ contains a cycle and $\{e, B\}$ is not forcing, is there a graph $H$ for which $\{e, B, H\}$ is a forcing triple with no forcing pairs?

\section{Acknowledgments}

I would like to thank Gavrilo Milićević and Victor Souza for many helpful discussions, and my supervisor Timothy Gowers for his continual support and guidance.

\bibliographystyle{plain}
\bibliography{auxiliary/references}

\begin{thebibliography}{10}

\bibitem{CGW}
Fan R.~K. Chung, Ronald~L. Graham, and Richard~M. Wilson.
\newblock Quasi-random graphs.
\newblock {\em Combinatorica}, 9(4):345--362, 1989.

\bibitem{CFS-10}
David Conlon, Jacob Fox, and Benny Sudakov.
\newblock An approximate version of sidorenko's conjecture, 2010.

\bibitem{CHPS-11}
David Conlon, Hi\d{\^e}p Hàn, Yury Person, and Mathias Schacht.
\newblock Weak quasi-randomness for uniform hypergraphs.
\newblock {\em Random Structures \& Algorithms}, 40(1):1--38, 2012.

\bibitem{HPS-11}
Hi\d{\^e}p Hàn, Yury Person, and Mathias Schacht.
\newblock Note on forcing pairs.
\newblock {\em Electronic Notes in Discrete Mathematics}, 38:437--442, 2011.
\newblock The Sixth European Conference on Combinatorics, Graph Theory and Applications, EuroComb 2011.

\bibitem{KRS_04}
Yoshiharu Kohayakawa, Vojtěch R{\"o}dl, and Papa Sissokho.
\newblock Embedding graphs with bounded degree in sparse pseudorandom graphs.
\newblock {\em Israel Journal of Mathematics}, 139(1):93--137, 2004.

\bibitem{LS-04}
László Lovász and Balázs Szegedy.
\newblock Limits of dense graph sequences, 2004.

\bibitem{SS-97}
Mikl{\'o}s Simonovits and Vera~T. S{\'o}s.
\newblock Hereditarily extended properties, quasi-random graphs and not necessarily induced subgraphs.
\newblock {\em Combinatorica}, 17(4):577--596, Dec 1997.

\bibitem{SS-2007}
Mikl{\'o}s Simonovits and Vera~T. Sós.
\newblock Szemerédi's partition and quasirandomness.
\newblock {\em Random Struct. Algorithms}, 2:1--10, 03 2007.

\bibitem{ST-04}
Jozef Skokan and Lubos Thoma.
\newblock Bipartite subgraphs and quasi-randomness.
\newblock {\em Graphs and Combinatorics}, 20(2):255--262, 2004.

\bibitem{T-87}
Andrew Thomason.
\newblock Pseudo-random graphs.
\newblock In A.~Barlotti, M.~Biliotti, A.~Cossu, G.~Korchmaros, and G.~Tallini, editors, {\em Annals of Discrete Mathematics (33)}, volume 144 of {\em North-Holland Mathematics Studies}, pages 307--331. North-Holland, 1987.

\bibitem{PH-Q}
Douglas West.
\newblock Forcing sets for quasirandomness (1988).
\newblock Available at: \url{https://dwest.web.illinois.edu/regs/quasiran.html}.

\end{thebibliography}

\appendix

\section{Approximate equality in Jensen's inequality}
\label{section:jensen}
We now give a proof of Lemma \ref{lemma:jensen}, which was used in the proofs of Lemma \ref{lemma:triple_second} and Lemma \ref{lemma:triple_third}.
The lemma is similar to a lemma concerning approximate equality in the Cauchy-Schwarz inequality that was presented by Kohayakawa, R\"{o}dl and Sissokho \cite{KRS_04} and Skokan and Thoma \cite{ST-04}. The only difference between this version and the one from \cite{ST-04} is the fact that here an arbitrary weighted average is considered. The proof is almost identical to the one given in \cite{ST-04}, and is provided here just for the sake of completeness.

\jensen*



\begin{proof}
To begin with, we prove the case $k = 2$. The cases $k > 2$ will follow from this case by Jensen's inequality. Let $k = 2$. Then 

\begin{equation*}
\begin{split}
    \sum_{i=0}^n p_i (a_i - A)^2 
    & = \sum_{i=0}^n p_i a_i^2 -2A\sum_{i=0}^n p_i a_i + A^2 \leq \\
    & \leq A^2((1 + \epsilon) -2 (1 - \epsilon) + 1) = \\
    & = A^2(3\epsilon).
\end{split}
\end{equation*}



\noindent
On the other hand, by the definition of $I$, if $\sum_{i\in I}t(i) \geq \delta$, then

$$\sum_{i=0}^n p_i(a_i -A)^2 \geq  \sum_{i\in I} p_i (a_i - A)^2 \geq \delta^2  A^2 \sum_{i\in I}p_i \geq \delta^3 A^2.$$
It follows that $\delta^3\leq 3\epsilon$. Therefore, the case $k = 2$ is done.  \\

To see that the theorem is true when $k > 2$, observe that, since then the function $x^{\frac{2}{k}}$ is concave, one has that

$$\sum_{i=0}^n p_i a_i^2 = \sum_{i=0}^n p_i (a_i^k)^{\frac{2}{k}} \leq \left(\sum_{i=0}^n p_i a_i^k \right)^{\frac{2}{k}}.$$
Therefore, if  $$\sum_{i=0}^n p_i a_i^k \leq A^k (1 + \epsilon),$$ then also
$$\sum_{i=0}^n p_i a_i^2 \leq A^2 (1 + \epsilon)^{\frac{2}{k}} \leq A^2(1 + \epsilon).$$
Thus, the problem has been reduced to the case $k = 2$, and the proof is finished.
\end{proof}




\section{Simple inequalities}
\label{section:verification}

In this section, we complete the proof of Theorem \ref{theorem:non-forcing} by verifying the inequalities needed for Proposition \ref{prop:condition}.

\nonforcing*


\begin{proof}
To see that each pair is not forcing, we draw up a table of various parameters applied to the graphs we use (see Table \ref{table:graphs} below). We shall use these in conjunction with Proposition \ref{prop:condition}.

Before we do this, we need to justify the values of $b$ in the table. Let $F$ be a graph with some vertex labelled $0$, and consider how $b(F)$ behaves when we construct graphs using the methods described in Section \ref{section:notation}. 
It is clear that $b(F^{(k)}) = b(F) + k$, $b(db_0(F)) = 2 b(F)$, $b(db_{0, 1}(F')) = 2b(F)+1$ and that $b(db_{1,2}(F''))=2b(F) + 4$. This gives us what we need.

Now let us prove the necessary inequalities.
For convenience, let $g_1(\cdot) := \frac{n(\cdot) - 1}{m(\cdot)}$, and $g_2(\cdot) := \frac{b(\cdot)}{m(\cdot)}$. 
Observe also, that the simple inequalities $N \leq M$, $B < M$ and $3 \leq N$ hold since $T$ is connected and non-bipartite. \\

\begin{table}
\begin{center}
 \begin{tabular}{| c | c | c | c | c | c |}
 \hline
 graph & $n$ & $m$ & $b$ & $\frac{n-1}{m}$ & $\frac{b}{m{}}$ \\ [0.5ex]
 \hline
 \hline
 $e$ & $2$ & $1$ & $1$ & $1$ & $1$ \\ [0.5ex]
 \hline
 $C_{2N}$ & $2N$ & $2N$ & $2N$ & $\frac{2N-1}{2N}$ & $1$ \\ [0.5ex]
 \hline
 $T$ & $N$ & $M$ & $B$ & $\frac{N-1}{M}$ & $\frac{B}{M}$ \\ [0.5ex]
 \hline
 $T'$ & $N+1$ & $M+1$ & $B+1$  & $\frac{N}{M+1}$ & $\frac{B+1}{M+1}$ \\ [0.5ex]
 \hline 
 $db_{0,1}(T')$ & $2N$ & $2M+1$ & $2B+1$ & $\frac{2N-1}{2M+1}$ & $\frac{2B+1}{2M+1}$ \\ [0.5ex] 
 \hline 
 $db_{1, 2}(db_0(T^{(N)})'')$ & $8N$ & $4M+4+4N$ & $4B+4+4N$ & $\frac{8N-1}{4M+4+4N}$ & $\frac{4B+4+4N}{4M+4+4N}$ \\ [0.5ex]
 \hline 
 $db_{1,2}(db_0^2(T)'')$ & $8N-4$ & $8M+4$ & $8B+4$  & $\frac{8N-5}{8M+4}$ & $\frac{8B+4}{8M+4}$ \\ [0.5ex]
 \hline
\end{tabular}
\caption{Overview of the parameters of used graphs.}
\label{table:graphs} 
\end{center}
\end{table}

Consider first the triple $\mathcal{H}_1(T)=\{C_{2N}, T', db_{0,1}(T')\}$ from Lemma \ref{lemma:triple_first}. We have that
$$ g_1(C_{2N}) = \frac{2N - 1}{2N} > \frac{N}{N+1} \geq \frac{N}{M+1} = g_1(T'), $$
$$ g_1(T') = \frac{N}{M+1} = \frac{2N}{2M+2} > \frac{2N-1}{2M+1} = g_1(db_{0,1}(T')),$$
$$ g_2(C_{2N}) = 1 > \frac{B+1}{M+1} = g_2(T'), $$
and
$$ g_2(T') = \frac{B+1}{M+1} = \frac{2B+2}{2M+2} > \frac{2B+1}{2M+1} = g_2(db_{0,1}(T')).$$ 
Therefore, the pairs are not forcing, by Proposition \ref{prop:condition}.\\

Next, we consider the triple $\mathcal{H}_2(T)=\{e, T', db_{1,2}(db_0^2(T)'')\}$ from Lemma \ref{lemma:triple_second}.  Here we have the inequalities
$$ g_1(e) = 1 >  \frac{N}{M+1} = g_1(T'),$$
$$ g_1(T') = \frac{N}{M+1} = \frac{8N}{8M+8} > \frac{8N-4}{8M+4} > \frac{8N-5}{8M+4} = g_1(db_{1,2}(db_0^2(T)'')),$$
$$ g_2(e) = 1 > \frac{B+1}{M+1} = g_2(T'), $$
and
$$ g_2(T') = \frac{B+1}{M+1} = \frac{8B+8}{8M+8} > \frac{8B+4}{8M+4} = g_2(db_{1,2}(db_0^2(T)'')).$$ 
As in the previous case, the pairs are not forcing, by Proposition \ref{prop:condition}.\\

Finally, for the triple $\mathcal{H}_3(T)=\{db_{1, 2}(db_0(T^{(N)})''), T', T\}$ from Lemma \ref{lemma:triple_third}, we have the inequality
$$ g_1(db_{1, 2}(db_0(T^{(N)})'')) = \frac{8N - 1}{4M + 4 + 4N} > \frac{N}{M+1} = g_1(T'),$$
because it is equivalent to
$$ (M+1)(4N-1) > 4N^2, $$
which is true, because $M \geq N > 1$. 
Furthermore,
$$ g_1(T') = \frac{N}{M+1} > \frac{N-1}{M} = g_1(T),$$
$$ g_2(db_{1, 2}(db_0(T^{(N)})'')) = \frac{4B+4+4N}{4M+4+4N} > \frac{4B+4}{4M+4} = \frac{B+1}{M+1} = g_2(T'),$$
and
$$ g_2(T') = \frac{B+1}{M+1} > \frac{B}{M} = g_2(T). $$ 
Therefore, the pairs are not forcing, by Proposition \ref{prop:condition} again, and Theorem \ref{theorem:non-forcing} is proved.
\end{proof}

\end{document}